\theoremstyle{plain}
\newtheorem{theorem}{Theorem}
\newtheorem{proposition}[theorem]{Proposition}
\newtheorem{lemma}[theorem]{Lemma}
\newtheorem{corollary}[theorem]{Corollary}
\theoremstyle{definition}
\newtheorem{definition}[theorem]{Definition}
\theoremstyle{remark}
\newtheorem{remark}[theorem]{Remark}
\numberwithin{theorem}{section}
\newcommand{\tightoverset}[2]{%
  \mathop{#2}\limits^{\vbox to -.5ex{\kern-1.05ex\hbox{$#1$}\vss}}}
\numberwithin{equation}{section} %Number equations withing sections.
\newcommand{\CC}{\mathbb{C}}
\newcommand{\ZZ}{\mathbb{Z}}
\def \bfr{{\mathbf r}}
\def \bfv{{\mathbf v}}
\newcommand{\defeq}{\stackrel{\mathrm{def}}{\, = \,}}
\newcommand{\bZ}{{\mathbb{Z}}}
\newcommand{\bR}{{\mathbb{R}}}
\newcommand{\bC}{{\mathbb{C}}}
\newcommand{\bQ}{{\mathbb{Q}}}
\newcommand{\mc}{\mathcal} 
\newcommand{\mb}{\mathbf} 
\newcommand{\torus}{{\mathbb{T}}}
\newcommand{\TITLE}{General Boyd--Lawton Theorems with Multivariable Limits}
\title {\TITLE}
\date{\today}
\author[Aitken]{Wayne Aitken}
\address{Department of Mathematics\\
California State University San Marcos\\
333 S. Twin Oaks Valley Rd.\\
San Marcos, CA 92096}
\email{waitken@csusm.edu}
\author[Ayers]{Kimberly Ayers}
\address{Department of Mathematics\\
California State University San Marcos\\
333 S. Twin Oaks Valley Rd.\\
San Marcos, CA 92096}
\email{kayers@csusm.edu}
\author[Smith]{Hanson Smith}
\address{Department of Mathematics\\
California State University San Marcos\\
333 S. Twin Oaks Valley Rd.\\
San Marcos, CA 92096}
\email{hsmith@csusm.edu}
\keywords{Mahler measure, Boyd--Lawton Theorem}
\subjclass[2020]{11R06}
\begin{document}

\begin{abstract}
The classical Boyd--Lawton theorem concerning
Mahler measures has recently been
extended to multivariable limits by Brunault, Guilloux, Mehrabdollahei, and Pengo.
In another direction, the single-variable Boyd--Lawton theorem
has been generalized to various extensions of Mahler measure by Issa and Lal\'in.
The goal of this paper is to present a cohesive framework for extending single-variable Boyd--Lawton theorems
to multivariable Boyd--Lawton theorems. With this, we broaden the single-variable Boyd--Lawton theorems of Issa and Lal\'in
to multivariable versions in the direction of Brunault, Guilloux, Mehrabdollahei, and Pengo, providing a generalization of both works.

\end{abstract}

\maketitle

%%%%%%%%%%%%%%%%%%%%%%%%%%%%%%%%%%%%%%%

\section{Introduction} \label{intro}

The classical Boyd--Lawton theorem of  \citep{boyd1981} and \citep{lawton1983} concerning
Mahler measures has been
extended to multivariable limits in
\cite{Brunault2024}, while the single-variable Boyd--Lawton theorem
has been extended to various generalizations of the Mahler measure (see~\cite[Theorem~1.2]{Issa2013}).
The goal of this paper is to present a straightforward framework for generalizing single-variable Boyd--Lawton theorems
to multivariable Boyd--Lawton theorems. Using this framework, we extend Boyd--Lawton theorems such as Theorem 1.2 of~\cite{Issa2013} to multivariable limits along the lines of Theorem~1.1 of~\cite{Brunault2024}.
Along the way, our result yields an independent proof of Theorem~1.1 of \cite{Brunault2024} via a separate line of reasoning.

We start with a quick review of traditional Mahler measure and
the associated Boyd--Lawton theorem.
Mahler measure is given by an integral on the $n$-dimension unit torus:
$$
\torus^n = \{\; (z_1, \ldots, z_n) \in \bC^n \; \colon 
	 \; |z_1| =|z_2| =  \cdots = |z_n| = 1 \}.
$$
We regard $\torus^n$ as a compact real Lie group under coordinate-wise multiplication.
Suppose that~$P \in \bC\left[Z^{\pm 1}_1, \ldots, Z^{\pm 1}_n\right]$ is a nonzero Laurent polynomial.\footnote{Throughout this paper we will engage in a slight abuse of
notation by using the same symbol $P$ for a Laurent polynomial, regarded as an algebraic object, and for the 
associated function $\torus^n \to \bC$ defined by
the rule~$
(z_1, \ldots, z_n) \mapsto P(z_1. \ldots, z_n)$.  
Context should help distinguish usage, which is aided by our use
of  upper-case variables such as~$Z_i$ for  purely symbolic variables and lower-case letters such as~$z_i$ for corresponding complex variables that usually vary on the unit circle~$\torus$.}
Then 
the $n$-variable \emph{Mahler measure}~$m_n(P)$ of $P$ is defined to be the average
value of the function $\log |P|$ on the unit torus~$\torus^n$:
$$
m_n(P) \; \defeq \; \int_{\torus^n} \log |P| \, d\tau_n  = \int_{0}^{1} \cdots \int_{0}^{1}  \log
\left| P\left(e^{2\pi t_1 \mb i}, \ldots, e^{2\pi t_n \mb i }\right) \right|
\;  d t_1 d t_2 \cdots d t_n
$$
where
$\tau_n$ denotes the normalized Haar measure on $\torus^n$.
In the special case of $n=1$, we write~$\torus$ for the unit circle~$\torus^1$ in $\bC$
and write~$\tau$ for the normalized Haar measure $\tau_1$.

The Mahler measure was considered by Mahler~\cite{Mahler}, but the single variable version was considered
much earlier by Lehmer \cite{Lehmer} in the context of constructing large prime numbers.

The famous \emph{Boyd--Lawton theorem} expresses any multivariable
Mahler measure as a limit of single variable Mahler measures.  
Suppose $P \in \bC\left[Z_1^{\pm 1}, \ldots, Z^{\pm 1}_n\right]$ is a nonzero Laurent polynomial where $n \ge 2$.
For each $\mb r = (r_1, \ldots, r_n) \in \bZ^n$
define the associated power-substitution $P^{(\mb r)} \in \bC\left[Z^{\pm 1}\right]$ to be the Laurent polynomial defined as follows:
$$
P^{(\mb r)} (Z) \; \defeq \;  P\left( Z^{r_1}, \ldots, Z^{r_n} \right).
$$
Define the \emph{Boyd height} of ${\mb r}$ as follows (cp.~\citep[p.~118]{boyd1981}):
$$
\mu\left( \mb r \right) 
\; \defeq \;  
\min 
\left\{ \| \mb v\|_\infty  \colon
    \text{$\mb v \in \bZ^n, \mb v \ne {\mb 0}$, and $\mb v \cdot \mb r = 0$} 
 \right\}.
 $$
 Since $n \ge 2$, any $\mb r \in \bZ^n$ has a nonzero $\mb v \in \bZ^n$ perpendicular to it. 
 Thus~$\mu\left({\mb r}\right)$ is well-defined
and is  a positive integer.
 
The Boyd--Lawton theorem then asserts the following:
$$
m_n(P) = \lim_{\mu\left({\mb r}\right) \to\infty} m_1\left(P^{(\mb r)}\right)
$$
where ${\mb r}$ varies among lattice points of~$\bZ^n$.

In this paper we will view the  Boyd--Lawton theorem in terms of composition with continuous homomorphisms $\torus \to \torus^n$.
If $\mb r = (r_1, \ldots, r_n)$ then let $q_{\mb r} \colon \torus \to \torus^n$ be the continuous 
homomorphism defined by the equation
$$
q_{\mb r} (z) = \big(z^{r_1}, \ldots, z^{r_n}\big).
$$
We define the \emph{Boyd height} $\mu\left( q_{\mb r} \right)$ of $q_{\mb r} \colon \torus \to \torus^n$ to be $\mu(\mb r)$, where $\mu(\mb r)$ is as above.
(See Section~\ref{height_section}  below for another definition of~$\mu\left(q_{\mb r}\right)$ in terms of homomorphism themselves.)
With this definition, another way to phrase the Boyd--Lawton theorem is that for any function~$P \colon \torus^n \to \bC$ defined by a $n$-variable Laurent polynomial we have
$$
\int_{\torus^n} \log |P| \, d\tau_n =   \lim_{\mu\left(q_{\mb r}\right) \to\infty}  \int_\torus (\log|P|) \circ q_{\mb r}\ d\tau,
$$
where $q_{\mb r}$ varies among continuous homomorphisms $\torus\to \torus^n$.

%%%%%%%%%%%%%%%%%%%%%%%%%%%%%%%%%%%%%

Observe that the traditional Mahler measure 
applies to averages of functions of the form~$\log |P|$ where~$P$ is given by a single Laurent polynomial.
In recent decades the concept of Mahler measure has been profitably extended in various ways to sequences $P_1, \ldots, P_k$ of Laurent polynomials. 
We mention first the paper~\cite{GonOyanagi2004}
where Gon and Oyanagi consider what they call \emph{generalized Mahler measures}.
Let $P_1, \ldots, P_k \in \bC\left[Z^{\pm 1}_1, \ldots, Z^{\pm 1}_n\right]$ be nonzero Laurent polynomials, and
let $g$ be the function on $\torus^n$ defined by taking the maximum value
of the respective logs:
$$
g_{\mathrm{max}} (z_1, \ldots, z_n)\;  \defeq  \; \max \{ \log|P_1(z_1, \ldots, z_n)|, \ldots, \log|P_k(z_1, \ldots, z_n)| \}.
$$
Gon and Oyanagi  defined the generalized Mahler measure to be the average value of this maximum function:
$$
m_{n, \mathrm{max}} (P_1, \ldots, P_k) \; \defeq \; \int_{\torus^n} g_{\mathrm{max}}  \, d\tau_n.
$$
%They establish interesting connections between generalized Mahler measures
%and special values. For instance, \cite[Example 5.3]{GonOyanagi2004} asserts
%$$
%m_{4, \mathrm{max}} ( Z_1 + 1, Z_2 + 1, Z_3 + 1, Z_4+1) = \frac 9{2 \pi^2} \zeta(3) - \frac {93}{2\pi^4} \zeta(5).
%$$

In \cite{Kurokawa2008} Kurokawa, Lal\'in, and Ochiai considered another extension 
of Mahler measure
which they call \emph{multiple higher Mahler measure}. 
As before let $P_1, \ldots, P_k \in  \bC\left[Z^{\pm 1}_1, \ldots, Z^{\pm 1}_n\right]$ be nonzero Laurent polynomials, 
but this time let
 $g_{\mathrm{prod}}$ be the function on $T^n$ defined by taking the product of logs:
$$
g_{\mathrm{prod}} \;  \defeq  \;  \log|P_1| \cdots \log|P_k|.
$$
Then the \emph{multiple higher Mahler measure} is defined to be
the average value:
$$
m_{n, \mathrm{prod}} (P_1, \ldots, P_k) \; \defeq \; \int_{\torus^n} g_\mathrm{prod} \ d\tau_n.
$$
%The paper \cite{Kurokawa2008} contains results such as
%$$
%m_{2, \mathrm{prod}}( Z_1 -1, Z_1-1, Z_1 -1 ) = - \frac {3\zeta(3)}2,
%$$
%$$
%m_{2, \mathrm{prod}}( Z_1 + Z_2 + 2, Z_1 + Z_2 +2, Z_1 + Z_2 + 2) = \frac 9 2 \log (2 \zeta (2)) - \frac  {15} 4 \zeta(3),
%$$
%and
%$$
%m_{1, \mathrm{prod}}( 1-Z_1, 1 + Z_1) = -\frac {\pi^2}{24}.
%$$

In \cite{Issa2013}, Issa and Lal\'in show that Boyd--Lawton 
applies to these generalizations of Mahler measure.
Theorem~1.2 of \cite{Issa2013} gives the following limits
$$
m_{n, \textrm{max}} (P_1, \ldots, P_k) = 
\lim_{\mu\left({\mb r}\right) \to\infty} m_{1, \textrm{max}} \left(P_1^{(\mb r)}, \ldots, P_k^{(\mb r)}\right),
$$
and
$$
m_{n, \textrm{prod}} (P_1, \ldots, P_k) = 
\lim_{\mu\left({\mb r}\right) \to\infty} m_{1, \textrm{prod}}\left(P_1^{(\mb r)}, \ldots, P_k^{(\mb r)}\right).
$$
(See also \cite[Theorem 30] {LalinSinha} for the second limit).
In \cite{Issa2013} and \cite{LalinSinha}, $P_1, \ldots, P_k$ are assumed to be polynomials in~$\bC[Z_1, \ldots, Z_n]$, and the coordinates of $\mb r$
are assumed to be positive integers. In Section \ref{extending_Issa} below we will extend the results of \cite{Issa2013} to Laurent polynomials~$P_1, \ldots, P_k$ 
where we allow $\mb r$ to vary among nonzero elements of $\bZ^n$ without sign restriction.
Observe that, so extended, the classical Boyd--Lawton is equivalent to the case where~$k=1$ in either of
these result.

Observe also that all versions of Boyd--Lawton we have considered up to now relate an integral on 
the torus $\torus^n$ to  limits of integrals on the unit circle $\torus$.
\emph{The purpose of the current paper is to show that when we have a class of functions
for which the classic Boyd--Lawton holds (expressing integrals on $\torus^n$ in terms of limits of integrals
on $\torus$), then there is simple way to extend the Boyd--Lawton result to a result
expressing an integral on $\torus^n$ in terms of limits of integrals on $\torus^m$ where 
we allow any positive integer $m$}.
In particular, our methods give a multivariable version of Boyd--Lawton not just for Mahler measures, but also for generalized
Mahler measures and multiple higher Mahler measures.

%%%%%%%%%%%%%

\subsection{The Main Result}

Our result is partially motivated by a recent result \cite[Theorem~1.1 in their general form given in Theorem 3.1] {Brunault2024} which we can express as follows:
If~$A$ is an $n$ by $m$ matrix with integer coefficients $a_{ij}$
then let~$q_A \colon \torus^m \to \torus^n$
be the continuous homomorphism
defined by the equation
$$
q_A \left(z_1, \ldots, z_m\right) 
=
\left( z_1^{a_{11}}  z_2^{a_{12}} \cdots  z_m^{a_{1m}},  \;  \ldots \; , \; z_1^{a_{n1}}  z_2^{a_{n2}} \cdots  z_m^{a_{nm}} 
\right).
$$
Then for any nonzero $P \in  \bC\left[Z^{\pm 1}_1, \ldots, Z^{\pm 1}_n\right]$ 
\begin{equation} \label{newBL}
\int_{\torus^n}  \log |P| \, d \tau_n  = \lim_{\mu\left(q_A\right) \to \infty}  \int_{\torus^m}  \log |P| \circ q_A \ d \tau_m,
\end{equation}
where
$$
\mu\left(q_A\right) 
\; \defeq \;  
\min 
\left\{ \| \mb v\|_\infty  \colon
    \text{$\mb v \in \bZ^n, \mb v \ne {\mb 0}$, and $\mb v \cdot A = \mb 0$} 
 \right\}
$$
and where in the limit of  (\ref{newBL}) above $A$ varies over the set of integer matrices with $n$ rows, but where the number of 
columns can be any positive integer $m$. (If no such $\mb v$ exists for a particular $A$, then we set $\mu (q_A)$ equal to $\infty$.
Also, note that in~Definition~\ref{boyd_height_definition} we provide a definition of~$\mu\left(q_{A}\right)$ directly in terms of the homomorphism themselves.)

Our result extends the limit (\ref{newBL}) to more general types of functions including those used in the extensions  of Mahler measures considered above.
Our result is expressed  in terms of ``Boyd--Lawton collections''. These are well-behaved collections of functions for which we
already have a single-variable Boyd--Lawton result:

\begin{definition} \label{BL collection def}
A \emph{Boyd--Lawton collection} 
 $\mc C = \bigcup_{n \ge 1} \mc C (\torus^n)$ is a collection of functions such that the following hold:
\begin{enumerate}
\item
If $g \in \mc C (\torus^n)$, then $g$ is defined and is complex-valued almost everywhere on $\torus^n$. Further, each $g \in \mc C(\torus^n)$
is integrable: it gives an element of the $L^1$-space~$L^1(\torus^n)$ where as usual $\torus^n$ is given the normalized Haar measure.
\item
For each $g\in \mc C(\torus^n)$ with $n\ge 2$, a classical Boyd--Lawton theorem holds:
$$
\int_{\torus^n}  g \, d\tau_n= \lim_{\mu(q_\mb r)\to \infty}\;  \int_\torus g \circ q_{\mb r} \, d \tau,
$$
where $q_{\mb r} \colon \torus \to \torus^n$ vary among the continuous homomorphisms $\torus \to\torus^n$ defined by lattice points~$\mb r \in \bZ^n$.
(Implicit here is the requirement that, for large enough $\mu\left(q_\mb r\right)$, the function $g \circ q_{\mb r}$ is integrable.)

\item
The class $\mc C$ is closed under composition by homomorphisms $q_{A}$
in the following sense: If $g$ is in $\mc C(\torus^n)$, then there is a bound $B$ such that if $q_A \colon \torus^m \to \torus^n$ has Boyd-height~$\mu (q_A) \ge B$, 
then the composition  $g\circ q_{A}$ is in $\mc C(\torus^m)$.
\end{enumerate}
\end{definition}

The main theorem of this paper is the following.

\begin{theorem} \label{main_theorem}
Suppose $\mc C$ is a Boyd--Lawton collection of functions. If $g \in \mc C (\torus^n)$,
then
$$
\int_{\torus^{n}} g \, d \tau_{n} \; = \;  \lim_{\mu\left( q_A \right)\to \infty}\;  \int_{\torus^m} g \circ q_A \; d \tau_m,
$$
where $q_A\colon \torus^m \to\torus^n$ varies among continuous homomorphisms. Here $m$ may vary in the limit, but $n$ is fixed by the choice of~$g$.% but $m$ varying among positive integers. %(where $m$ itself varies among positive integers but where $n$ is fixed by choice of $g$).
\end{theorem}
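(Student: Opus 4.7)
The plan is to leverage the single-variable Boyd--Lawton property encoded in axiom~(2) of the definition, applying it twice: once to $g$ itself on $\torus^n$, and once to the composition $g \circ q_A$, which lies in $\mc C(\torus^m)$ once $\mu(q_A)$ exceeds the bound $B$ provided by axiom~(3).

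Fix $\epsilon > 0$ and set $I \defeq \int_{\torus^n} g \, d\tau_n$. Using axiom~(2) for $g$, choose an integer $N$ such that $\left| \int_\torus g \circ q_{\mb r} \, d\tau - I \right| < \epsilon/2$ whenever $\mu(q_{\mb r}) \ge N$, enlarging $N$ if necessary so that $N \ge \max(B,2)$. Now let $q_A \colon \torus^m \to \torus^n$ be any continuous homomorphism with $\mu(q_A) \ge N$. The case $m=1$ is immediate, since then $q_A = q_{\mb r}$ for some $\mb r \in \bZ^n$ with $\mu(q_{\mb r}) \ge N$. For $m \ge 2$, axiom~(3) gives $g \circ q_A \in \mc C(\torus^m)$, so axiom~(2) applied to $g \circ q_A$ yields
$$
\int_{\torus^m} g \circ q_A \, d\tau_m \;=\; \lim_{\mu(q_{\mb s}) \to \infty} \int_\torus g \circ q_{A \mb s} \, d\tau,
$$
where we use the evident identity $q_A \circ q_{\mb s} = q_{A\mb s}$ for $\mb s \in \bZ^m$. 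In particular, there exists $K$ (depending on $A$) such that whenever $\mu(q_{\mb s}) \ge K$, the right-hand integral is within $\epsilon/2$ of $\int_{\torus^m} g \circ q_A \, d\tau_m$.

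The crux is to choose such an $\mb s$ that \emph{simultaneously} satisfies $\mu(q_{A \mb s}) \ge N$: then axiom~(2) for $g$ places $\int_\torus g \circ q_{A\mb s} \, d\tau$ within $\epsilon/2$ of $I$, and the triangle inequality yields $\left| \int_{\torus^m} g \circ q_A \, d\tau_m - I \right| < \epsilon$. Both conditions on $\mb s$ translate into avoiding finitely many proper rank-$(m-1)$ subgroups of $\bZ^m$. The condition $\mu(q_{\mb s}) < K$ confines $\mb s$ to such a union by definition, while $\mu(q_{A \mb s}) < N$ requires some nonzero $\mb v \in \bZ^n$ with $\|\mb v\|_\infty < N$ to satisfy $(\mb v \cdot A) \cdot \mb s = 0$; the hypothesis $\mu(q_A) \ge N$ guarantees $\mb v \cdot A \ne \mb 0$, so $\{\mb s \in \bZ^m : (\mb v \cdot A) \cdot \mb s = 0\}$ is a proper rank-$(m-1)$ subgroup. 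Since $m \ge 2$, a finite union of such subgroups cannot exhaust $\bZ^m$ (e.g.\ by a density argument), so an admissible $\mb s$ exists.

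The main obstacle is precisely this simultaneous-avoidance step: arranging that the $\mb s$ chosen to invoke single-variable Boyd--Lawton on $\torus^m$ is also generic enough for the composed homomorphism $q_{A\mb s}$ to have Boyd height at least $N$. Once this hyperplane-avoidance observation is in place, the rest is a standard $\epsilon/2 + \epsilon/2$ estimate, and the conclusion follows uniformly across all $m \ge 1$ and all $q_A$ with sufficiently large Boyd height.
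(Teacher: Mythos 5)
For $n \ge 2$ your argument is correct but takes a genuinely different route from the paper's at the crucial existence step. The paper proves the quantitative composition inequality Proposition~\ref{TP_prop} (if $\mu\left(q_{\mb r}\right)$ is large compared to $\|A\|_\infty$ and $\mu\left(q_A\right) \ge M$, then $\mu\left(q_A \circ q_{\mb r}\right) \ge M$) and combines it with Proposition~\ref{unbounded_lemma} to manufacture an $\mb r$ whose Boyd height is so large that the Boyd height of the composite is automatically above the needed threshold. You instead observe that the failure sets $\{\mb s : \mu\left(q_{\mb s}\right) < K\}$ and $\{\mb s : \mu\left(q_{A\mb s}\right) < N\}$ each sit inside a finite union of proper rank-$(m-1)$ sublattices of $\bZ^m$, using the key fact (which you correctly identify) that $\mu\left(q_A\right) \ge N$ forces $\mb v \cdot A \ne \mb 0$ for nonzero $\mb v$ with $\|\mb v\|_\infty < N$; a density count then produces an admissible $\mb s$. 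This is a sound soft substitute for Proposition~\ref{TP_prop}: it gives no explicit bound in terms of $\|A\|_\infty$, but it avoids the quantitative lemma altogether. Each approach buys something: the paper's is constructive and reusable elsewhere (e.g.\ Lemma~\ref{Pqr_nonvanishing}), while yours is shorter and more conceptual.

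However, there is a genuine gap: you do not handle $n = 1$. Axiom~(2) of Definition~\ref{BL collection def} only asserts a single-variable Boyd--Lawton theorem for $g \in \mc C(\torus^n)$ with $n \ge 2$, so your opening step --- ``using axiom~(2) for $g$, choose an integer $N$'' --- is unavailable when $n = 1$, and nothing in your argument repairs this. Note that for $n = 1$ the hypothesis $\mu\left(q_A\right) \ge 2$ already forces $\mu\left(q_A\right) = \infty$ (equivalently $q_A$ surjective), so the claim actually reduces to the \emph{exact} identity $\int_{\torus} g \, d\tau = \int_{\torus^m} g \circ q_A \, d\tau_m$ for surjective $q_A$; this requires a proof and cannot be extracted from your $\epsilon/2 + \epsilon/2$ estimate. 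The paper handles it by lifting $g$ to $\tilde g(z_1,z_2) = g(z_1)$ on $\torus^2$, checking $\tilde g = g \circ q_{(1,0)} \in \mc C(\torus^2)$ via axiom~(3) (since $q_{(1,0)}$ is surjective and so has infinite Boyd height), applying the already-proved $n \ge 2$ case, and translating back with Fubini. You should supply a separate argument for $n = 1$ along these or similar lines.
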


If one prefers to keep $m$ fixed, then the analogous result holds as a corollary. 
Note the
case $n > 1$ and $m=1$ is the single-variable Boyd--Lawton, which here we are assuming for~$g$.

\begin{corollary}
Suppose $\mc C$ is a Boyd--Lawton collection of functions. If $g \in \mc C (\torus^n)$ and if $m$ is a positive integer,
then
$$
\int_{\torus^{n}} g \, d \tau_{n} \; = \;  \lim_{\mu\left( q_A \right)\to \infty}\;  \int_{\torus^m} g \circ q_A \; d \tau_m,
$$
where $q_A$ varies among continuous homomorphisms~$ \torus^m \to\torus^n$.
\end{corollary}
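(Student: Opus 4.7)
The plan is to deduce the corollary as an immediate specialization of Theorem~\ref{main_theorem}. The main theorem already asserts the limit
$$
\int_{\torus^n} g \, d\tau_n = \lim_{\mu(q_A) \to \infty} \int_{\torus^{m'}} g \circ q_A \, d\tau_{m'}
$$
where $q_A$ is allowed to range over continuous homomorphisms $\torus^{m'} \to \torus^n$ for \emph{every} positive integer $m'$. The corollary merely restricts the index set of this limit to those $q_A$ whose source dimension equals a prescribed $m$.

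Concretely, I would argue as follows: given $\varepsilon > 0$, Theorem~\ref{main_theorem} supplies a threshold $B > 0$ such that every continuous homomorphism $q_A$ (of any source dimension) with $\mu(q_A) \ge B$ satisfies
$$
\left| \int_{\torus^{m'}} g \circ q_A \, d\tau_{m'} - \int_{\torus^n} g \, d\tau_n \right| < \varepsilon.
$$
In particular, the same inequality holds for the sub-family of homomorphisms whose source is the fixed torus $\torus^m$. This is exactly the statement of the corollary.

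A small sanity check is needed to confirm the limit is non-vacuous, namely that for each fixed $m \ge 1$ the Boyd height $\mu(q_A)$ actually attains arbitrarily large values as $A$ ranges over integer $n \times m$ matrices. When $m \ge n$, one may take $A$ of rank $n$, so that no nonzero integer vector lies in the left kernel and $\mu(q_A) = \infty$ by the convention stated in the introduction. When $m < n$, one adapts the classical single-variable construction by taking the columns of $A$ to be suitable geometric progressions (e.g.\ the single column $(1, N, N^2, \ldots, N^{n-1})^T$ when $m=1$) so that $\mu(q_A)$ grows without bound.

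I do not anticipate any serious obstacle, since no new analytic estimate is required beyond what already goes into Theorem~\ref{main_theorem}; the only content is the purely logical observation that convergence along a filter implies convergence along any cofinal sub-filter, together with the existence of arbitrarily tall $q_A$ for each fixed $m$.
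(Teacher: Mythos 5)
Your argument is correct and matches the paper's (implicit) reasoning: the corollary is an immediate specialization of Theorem~\ref{main_theorem} by restricting attention to the sub-family of homomorphisms with source $\torus^m$, and the non-vacuity of the restricted limit is supplied by Proposition~\ref{unbounded_lemma} (the geometric-progression construction, which you reproduce) together with Proposition~\ref{infinite_height_lemma} for the case $m \ge n$.
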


In practice, a reader may prefer the sequential version which aligns better with \cite[Theorem~1.1 and Theorem~3.1] {Brunault2024}:

\begin{corollary}
Suppose  $g \in \mc C(\torus^n)$ where
 $\mc C = \bigcup \mc C(\torus^n)$ is a Boyd--Lawton collection of functions.
 Suppose $(q_{A_i})$ is a sequence of continuous homomorphisms $\torus^{m_i} \to \torus^{n}$
 with each~$m_i\in\ZZ_{>0}$ and the associated sequence of Boyd-heights~$\big( \mu\left( q_{A_i} \right)\big) $ approaching $\infty$. Then
$$
\int_{\torus^{n}} g \, d \tau_{n} 
\; = \; 
\lim_{i\to \infty}  \; \int_{\torus^{m_i}} g \circ q_{A_i} \; d \tau_{m_i}.
$$
\end{corollary}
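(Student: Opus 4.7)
My plan is to reduce the general multivariable statement to the single-variable Boyd--Lawton assumed in condition~(2) of Definition~\ref{BL collection def}. The device is to insert an auxiliary ``bridge'' homomorphism $q_{\mb r'}\colon \torus \to \torus^m$ and exploit the identity $q_A \circ q_{\mb r'} = q_{A\mb r'}$. Fix $g \in \mc C(\torus^n)$ and $\epsilon > 0$. From condition~(2) applied to $g$, obtain a threshold $B_1$ such that $\mu(q_{\mb s}) \ge B_1$ implies
$$
\left| \int_{\torus^n} g\, d\tau_n - \int_\torus g \circ q_{\mb s}\, d\tau \right| < \epsilon/2;
$$
from condition~(3), obtain $B_2$ such that $\mu(q_A) \ge B_2$ forces $g \circ q_A \in \mc C(\torus^m)$. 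Set $B = \max(B_1, B_2)$ and fix any $A$ with $\mu(q_A) \ge B$. I plan to exhibit $\mb r' \in \bZ^m$ such that simultaneously: (i)~condition~(2) applied to $g \circ q_A \in \mc C(\torus^m)$ yields $\bigl|\int_{\torus^m} g \circ q_A\, d\tau_m - \int_\torus g \circ q_{A\mb r'}\, d\tau\bigr| < \epsilon/2$; and (ii)~$\mu(q_{A\mb r'}) \ge B_1$, so condition~(2) applied to $g$ at $\mb s = A\mb r'$ gives $\bigl|\int_{\torus^n} g\, d\tau_n - \int_\torus g \circ q_{A\mb r'}\, d\tau\bigr| < \epsilon/2$. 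The triangle inequality then delivers the result.

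\textbf{Key Boyd-height inequality.} The arithmetic core of the argument is the following. Suppose $A\mb r' \ne \mb 0$ and let $\mb v \in \bZ^n$ be nonzero with $\mb v \cdot (A\mb r') = 0$. Setting $\mb u = A^\top \mb v \in \bZ^m$, one has $\mb u \cdot \mb r' = 0$. Either $\mb u = \mb 0$, in which case $\mb v^\top A = 0$ gives $\|\mb v\|_\infty \ge \mu(q_A)$; or $\mb u \ne \mb 0$, in which case $\mu(q_{\mb r'}) \le \|\mb u\|_\infty \le n M \|\mb v\|_\infty$ with $M = \max_{i,j}|a_{ij}|$, giving $\|\mb v\|_\infty \ge \mu(q_{\mb r'})/(nM)$. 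Taking the minimum over all such $\mb v$,
$$
\mu(q_{A\mb r'}) \;\ge\; \min\!\left(\mu(q_A),\; \frac{\mu(q_{\mb r'})}{n M}\right).
$$
Since $\mu(q_A) \ge B_1$ by the choice of $B$, condition~(ii) holds whenever $\mu(q_{\mb r'}) \ge nMB_1$ and $A\mb r' \ne \mb 0$.

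\textbf{Producing $\mb r'$ and edge cases.} Let $N(A)$ denote the threshold supplied by condition~(2) applied to $g \circ q_A$ with tolerance $\epsilon/2$, so that $\mu(q_{\mb r'}) \ge N(A)$ guarantees~(i). It therefore suffices to find $\mb r' \in \bZ^m$ with $\mu(q_{\mb r'}) \ge \max\!\bigl(N(A), nMB_1\bigr)$ and $A\mb r' \ne \mb 0$. For $m \ge 2$, the explicit family $\mb r' = (1, N, N^2, \dots, N^{m-1})$ has Boyd height growing unboundedly with $N$, and $A\mb r'$ is a nonzero polynomial in $N$ (since $A \ne \mb 0$ whenever $\mu(q_A) \ge B \ge 2$), so $A\mb r' \ne \mb 0$ for all but finitely many $N$; any sufficiently large $N$ works. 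The case $m = 1$ is precisely the single-variable Boyd--Lawton already assumed in condition~(2), and the case $n = 1$ is trivial because $\mu(q_A) < \infty$ only when $A = \mb 0$, so $\mu(q_A) \ge B$ forces $q_A$ to be a surjective continuous homomorphism $\torus^m \to \torus$, pushing the normalized Haar measure to the normalized Haar measure and making the two integrals identically equal. The main obstacle in the plan is precisely the joint height condition coupling $\mu(q_{\mb r'})$ to $\mu(q_{A\mb r'})$; the Boyd-height inequality above resolves it.
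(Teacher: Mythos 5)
Your argument is essentially the paper's proof of Theorem~\ref{main_theorem_alt} (from which this sequential corollary follows by a routine limit argument): the same bridge homomorphism $q_{\mathbf r'}\colon \torus \to \torus^m$, the same $\varepsilon/2 + \varepsilon/2$ triangle inequality through the three integrals, and a Boyd-height composition bound. Your inequality $\mu(q_{A\mathbf r'}) \ge \min\bigl(\mu(q_A),\, \mu(q_{\mathbf r'})/(nM)\bigr)$ is a restatement of Proposition~\ref{TP_prop}, and your explicit vector $\mathbf r' = (1, N, N^2, \dots, N^{m-1})$ is precisely the construction of Proposition~\ref{unbounded_lemma}. The one genuine divergence is the $n=1$ case: you invoke the external fact that a surjective continuous homomorphism of compact groups pushes normalized Haar measure forward to normalized Haar measure, whereas the paper instead lifts $g$ to $\tilde g \in \mathcal C(\torus^2)$ via composition with a coordinate projection (using condition~(3) of Definition~\ref{BL collection def}) and reduces to the already-proved $n=2$ case, thereby staying within its own framework. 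Your route is shorter but relies on a fact the paper neither proves nor cites. One small fix: take $B \ge 2$ explicitly so that $\mu(q_A)\ge B$ guarantees $A \ne \mathbf 0$, which your $m\ge 2$ construction and your $n=1$ argument both tacitly assume.
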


\bigskip

These results apply to any Boyd--Lawton collections functions including the following:
\begin{enumerate}
\item
The case where $\mc C(\torus^n)$ is the collection of continuous functions $\torus^n \to \bC$.
The single-variable Boyd--Lawton in this case is due to Boyd alone \cite[Lemma~1]{boyd1981}.
\item
The case where $\mc C(\torus^n)$ is the collection of (almost everywhere defined) functions on the unit torus~$\torus^n$ of the form
$$
(z_1, \ldots, z_n) \mapsto \log | P(z_1, \ldots, z_n) |
$$
where $P$ is a nonzero Laurent polynomial.
These are just the function used in defining the traditional Mahler measure.

\item
The case where $\mc C(\torus^n)$ is the collection of (almost everywhere defined) functions on the unit torus~$\torus^n$ of the form
$$
(z_1, \ldots, z_n) \mapsto  \max \left\{ 
    \log | P_1(z_1, \ldots, z_n) |, \ldots, \log| P_k(z_1, \ldots, z_n)| 
    \right\}
$$
where each $P_i$ is a nonzero  Laurent polynomial in the ring  $\bC[Z^{\pm 1}_1, \ldots, Z^{\pm 1}_n]$.
These are the functions related to \emph{generalized Mahler measures} introduced
in~\cite{GonOyanagi2004}.

\item
The case where $\mc C(\torus^n)$ is the collection of (almost everywhere defined) functions on the unit torus~$\torus^n$ of the form
$$
(z_1, \ldots, z_n) \mapsto  
    \log | P_1(z_1, \ldots, z_n) | \cdot \log | P_2(z_1, \ldots, z_n) | \cdots \log| P_k(z_1, \ldots, z_n)| 
$$
where each $P_i$ is a nonzero  Laurent polynomial in the ring  $\bC[Z^{\pm 1}_1, \ldots, Z^{\pm 1}_n]$.
These are the functions related to \emph{multiple higher Mahler measures} of~\cite{Kurokawa2008}.
\end{enumerate}
These collections are considered in more detail in Section~\ref{section_BL_Collections} below and will be shown to be Boyd--Lawton collections.
There are likely other  Boyd--Lawton collections one might wish to consider, extending this already rich set of examples.

%%%%%%%%%%%%%%%%%%%%%%%

\subsection{Related Work and Themes}

There is a good outline of the history and applications related to the Boyd--Lawton theorem
in \cite{Brunault2024} (Section 1.1. Historical remarks). 
The interested reader should also consult
\cite{BertinLalin},
\cite{Brunault_Zudilin_2020}, and~\cite{McKee_Smyth_2021} for  extensive treatments and bibliographies for the general topic of Mahler measures and their applications.  

We have recently become aware of other generalizations of the Mahler measure besides
those discussed above.
For example, H.~Akatsuka~\cite{Akatsuka} considers what are known as zeta Mahler measures, and S.~Roy~\cite{Roy} considers Mahler measures where the integration occurs over other tori than the unit tori.
These generalized Mahler measures are ideal candidates for general  Boyd--Lawton theorems
and seem likely to fall under the framework offered here. Going beyond our framework we note
that partial versions of the Boyd--Lawton theorem have 
been introduced in the more general setting of 
arithmetical dynamics~\cite[Proposition~1.3]{twovariablemahlerPublished}.

One of the most intriguing application of Mahler measure and their generalizations is their connection with special values of 
zeta functions, and $L$-functions more generally.
For example, Boyd \cite{BoydLFunctions} shows that
$$
m_3( Z_1 + Z_2 + Z_3 + 1) = \frac 7{2 \pi^2} \zeta(3)
$$
where $\zeta(s)$ is the Riemann zeta function. 
For generalized Mahler measures, Gon and Oyanagi~\cite{GonOyanagi2004} find formulas such as
$$
m_{4, \mathrm{max}} ( Z_1 + 1, Z_2 + 1, Z_3 + 1, Z_4+1) = \frac 9{2 \pi^2} \zeta(3) - \frac {93}{2\pi^4} \zeta(5).
$$
Kurokawa, Lal\'in, and Ochiai \cite{Kurokawa2008} show results
for
multiple higher Mahler 
measures such as
$$
m_{2, \mathrm{prod}}( Z_1 -1, Z_1-1, Z_1 -1 ) = - \frac {3\zeta(3)}2
$$
and
$$
m_{2, \mathrm{prod}}\left( Z_1 + Z_2 + 2, Z_1 + Z_2 +2, Z_1 + Z_2 + 2\right) = \frac 9 2 \log \big(2 \zeta (2)\big) - \frac  {15} 4 \zeta(3).
$$

In fact, connections with special values of $L$-functions is a major theme of research since the pioneering work in D.~Boyd \cite{BoydSpeculations} and C.~J.~Smyth \cite{Smyth}. The $L$-functions involved include not only the Riemann zeta function and Dirichlet $L$-functions but also $L$-functions of elliptic curves, modular forms, and $K3$-surfaces. This phenomenon has been connected with the Beilinson conjectures since
the work of C.~Deninger \cite{Deninger}.
The connections of certain Mahler measures to $L$-series
of elliptic curves is investigated in many articles starting with~\cite{BoydLFunctions} and~\cite{RVModularMahler}.
Work on the connection of certain Mahler measures to $K3$-surfaces can be found in several articles including~\cite{Bertin2008}.
Interesting connections with $L$-functions of modular forms can be found in~\cite{Samart2013}.
For further information and more citations in this interesting area of research see \cite{BertinLalin} and \cite{Brunault_Zudilin_2020}. 
These deep connections to elliptic curves, K3-surfaces and modular forms are most elaborated for the traditional Mahler measure, but it would be interesting and perhaps useful to see similar connections for the extended Mahler measures considered here.

%%%%%
%%%%%
%%%%%

\section{Continuous Homomorphisms between Real Tori} \label{Section_Homomorphisms}

We now discuss the connection between matrices with entries in $\bZ$ and
continuous homomorphisms $\torus^m \to \torus^n$. 

Throughout this paper $\torus$ will denote the unit circle in $\bC$, and $\torus^n$
will denote the unit torus
$$
\torus^n = \{\; (z_1, \ldots, z_n)  \; \colon 
	 \; |z_1| =|z_2| =  \cdots = |z_n| = 1 \} \subseteq \bC^n.
$$
Note that $\torus^n$ is a multiplicative group under componentwise multiplication.
Its topology is that inherited from the topology of $\bC^n$, and under this
topology $\torus^n$ is a topological group. In fact, $\torus^n$ can be given the structure
of a  real Lie group. The group $\torus^n$ is compact and has a unique normalized
Haar measure $\tau_n$ such that $\torus^n$ has total measure $1$.
This measure is just the~$n$-term product measure $\tau \otimes \cdots \otimes \tau$ formed from $\tau = \tau_1$.
Here $\tau$ is the standard normalized measure on the unit circle: 
arc length divided by $2\pi$.

We consider continuous homomorphism $\torus^m \to \torus^n$. Let~$\mathrm{Hom}(\torus^m, \torus^n)$ be the collection of such homomorphisms. We are interested in measuring the complexity of elements of~$\mathrm{Hom}(\torus^m, \torus^n)$. Our measure of complexity will be called the Boyd height. 
Note, $\mathrm{Hom}(\torus^m, \torus^n)$ forms an Abelian group under multiplication of functions. In fact, 
the group~$\mathrm{Hom}(\torus^m, \torus^n)$
is canonically isomorphic to the additive group $M_{n, m}(\bZ)$ of $n$ by $m$ matrices $A$ with coefficients in $\bZ$. Now we delve into this connection in more detail. 

%Here is how matrices are connected with continuous homomorphisms. 
Let $A \in M_{n, m}(\bZ)$
be an $n$ by $m$ matrix with integer coefficients. Let $\tilde A \colon \bR^m \to \bR^n$
be the corresponding linear homomorphism described by mapping a column vector $\mb r \in \bR^m$
to the column vector $A \cdot \mb r$. Then we have a commutative diagram
of continuous
homomorphisms (in the category of topological groups):
 $$
 \begin{tikzcd} [column sep = large]
\bR^m  \arrow[r, "\tilde A"]  \arrow[d] & \bR^n \arrow[d]    \\
\bR^m/\bZ^m  \arrow[r, "\bar A"]  \arrow[d, "\mathrm{exp}"] & \bR^n/\bZ^n  \arrow[d, "\mathrm{exp}"]   \\
\torus^m  \arrow[r,  "q_A"]  & \torus^n    \\
\end{tikzcd}
$$
The unlabeled maps above are just the canonical projections.  The map $\overline A$ is the induced
map on the quotient group; it is well-defined since $A$ has integer coefficients.
The map $q_A$ can be seen to be the map describe above in our introduction.
The exponential map $\bR^m / \bZ^m \to T^m$ is the isomorphism 
$$
(z_1, \ldots, z_m) \mapsto \big( \exp(2 \pi z_1 i), \ldots, \exp(2 \pi z_m i) \big),
$$
and the exponential map $\bR^n / \bZ^n \to \torus^n$ is defined similarly.

It turns out that every continuous homomorphism $\torus^m \to \torus^n$ is of the form $q_A$
for some (unique) matrix~$A$ in $M_{n, m}(\bZ)$.
(This is likely well-known, but for the convenience
of the reader we justify it in the appendix).
Further, $A \mapsto q_A$ defines the isomorphism between the (additive) group~$M_{n,m}(\bZ)$ and
the (multiplicative) group~$\mathrm{Hom}(\torus^m, \torus^n)$.

If $\mb r \in \bZ^n$, then we can view $\mb r$ as an element of $M_{n, 1}(\bZ)$
and so $q_{\mb r}$ will denote a homomorphism $\torus \to \torus^n$. These are the types
of homomorphisms present in the classical Boyd--Lawton theorem. (Occasionally, 
$\mb r \in \bZ^n$ will also be thought of as a row vector in~$M_{1, n}(\bZ)$
and so $q_{\mb r}$ will denote a homomorphism [a character] $\torus^n \to \torus$.
Context will make the meaning behind the notation clear.) % it clear on what $q_{\mb r}$ denotes.)

Suppose $A_1 \in M_{n, m}(\bZ)$ and $A_2 \in M_{m, l}(\bZ)$.
Combining commutative diagrams gives us the following commutative diagram:
 $$
 \begin{tikzcd} [column sep = large]
\bR^l \arrow[r, "\tilde A_2"] \arrow[d] 
     \arrow[rr, bend left, "\tilde A_1 \circ \tilde A_2 \, =\;    \widetilde{A_1 A_2}"]  
        & \bR^m  \arrow[r, "\tilde A_1"]  \arrow[d] & \bR^n \arrow[d]    \\
\torus^l  \arrow[r,  "q_{A_2}"] \arrow[rr, swap, bend right, "q_{A_1} \circ\,  q_{A_2}"]  & 
\torus^m  \arrow[r,  "q_{A_1}"]  & \torus^n    \\
\end{tikzcd}
$$
The bottommost arrow must be $q_{A_1 A_2}$ because the diagram commutes.
Thus we have the following:

\begin{proposition} \label{comp_prop}
Let  $A_1 \in M_{n, m}(\bZ)$ and $A_2 \in M_{m, l}(\bZ)$. Then the associated
continuous homomorphisms $q_{A_1} \colon \torus^m \to \torus^n$ and $q_{A_2} \colon \torus^l \to \torus^m$
satisfy the following composition law:
$$
q_{A_1} \circ q_{A_2} = q_{A_1 A_2}.
$$
\end{proposition}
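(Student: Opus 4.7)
The plan is to verify the composition law using the commutative diagram displayed just before the proposition, which reduces the claim on tori to the analogous (trivial) claim about matrix multiplication of linear maps on $\bR^l$.

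First I would observe that, by definition of matrix multiplication, the linear maps $\tilde A_1 \colon \bR^m \to \bR^n$ and $\tilde A_2 \colon \bR^l \to \bR^m$ satisfy $\tilde A_1 \circ \tilde A_2 = \widetilde{A_1 A_2}$. This is simply the statement that matrix multiplication represents composition of the associated linear maps. Next, I would invoke the commutativity of the two vertical squares in the three-tiered diagram (each is a naturality square for the exponential projection $\bR^k \to \torus^k$), together with the fact that each exponential map is surjective. Given any $z \in \torus^l$, choose a lift $\mathbf{x}\in\bR^l$ with $\exp(\mathbf{x}) = z$. Chasing $\mathbf{x}$ through the top row yields $\widetilde{A_1 A_2}(\mathbf{x})\in\bR^n$, and chasing down via the exponential maps gives both $q_{A_1}(q_{A_2}(z))$ and $q_{A_1 A_2}(z)$ as images of this same element; surjectivity of $\exp$ on each level ensures that equality of the two downward paths at the $\torus^n$ level holds for every $z\in\torus^l$.

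As a sanity check (and an alternative self-contained argument if one prefers not to cite the diagram) I would carry out the direct exponent computation: writing $A_1 = (a_{ij})$, $A_2 = (b_{jk})$, and $z = (z_1,\ldots,z_l)\in\torus^l$, one has $q_{A_2}(z)_j = \prod_{k} z_k^{b_{jk}}$, hence
$$
q_{A_1}\bigl(q_{A_2}(z)\bigr)_i \;=\; \prod_{j} \Bigl(\prod_{k} z_k^{b_{jk}}\Bigr)^{a_{ij}} \;=\; \prod_{k} z_k^{\sum_{j} a_{ij} b_{jk}} \;=\; \prod_{k} z_k^{(A_1 A_2)_{ik}},
$$
which is precisely $q_{A_1 A_2}(z)_i$. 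This swap of the order of products is justified because the exponents are integers and each $z_k$ is a fixed complex number.

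There is essentially no obstacle here: the statement is a formality once one has set up the correspondence $A \mapsto q_A$ between $M_{n,m}(\bZ)$ and $\mathrm{Hom}(\torus^m,\torus^n)$. The only thing to be mildly careful about is justifying the interchange of products in the direct computation, which is legitimate because the exponents are integers (so no branch-of-logarithm issues arise), and confirming that the two squares in the lifted diagram genuinely commute, which is immediate from the definitions of $\bar A$ and $q_A$ as the maps induced by $\tilde A$ on the quotients.
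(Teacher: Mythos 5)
Your diagram-based argument is essentially the same as the paper's proof, which simply combines the commutative squares and notes that the bottommost arrow must be $q_{A_1 A_2}$ by commutativity; your explicit element-chase and surjectivity remark just spell out that reasoning. The direct exponent computation you add is a correct and welcome self-contained alternative, though not needed once the diagram is in place.
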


\begin{remark}
We can think of this proposition as giving an equivalence of categories between the category with 
arrows equal to integer matrices (with objects equal to positive integers, say)  and the category of finite dimensional real tori with arrows equal to continuous homomorphisms.
\end{remark}

%%%%%
%%%%%
%%%%%

\section{Boyd Heights of Homomorphisms} \label{height_section}

In~\citep[p.~118]{boyd1981} Boyd defines a measure of the complexity of a nonzero integer vector $\mb r \in \bZ^n$
where $n\ge 2$:
$$
\mu(\mb r) =  \min  \{ \| \mb v \|_\infty \colon \mb v \cdot  \mb r = 0,  \, \mb v \in \bZ^n, \,  \mb v \ne 0 \}.
$$
Since $\mathrm{Hom}(\torus, \torus^n)$ is canonically isomorphic to $\bZ^n \cong M_{n, 1}(\bZ)$, we can 
just as well regard
this measure as giving a measure of complexity to elements $q_{\mb r}$ of $\mathrm{Hom}(\torus, \torus^n)$.
In particular, we will write $\mu\left(q_{\mb r}\right)$ for $\mu(\mb r)$ when we wish to emphasize the
complexity, or ``height'', of the homomorphism $q_{\mb r}$ instead of the vector~$\mb r$. 
In order to make the case that this is a natural measure of complexity on~$\mathrm{Hom}(\torus, \torus^n)$ we can use Proposition~\ref{comp_prop} to rewrite the formula for this height in terms of homomorphisms as follows:
$$
\mu\left(q_{\mb r}\right) =  \min  \{ \| q_{\mb v}  \| \colon q_{\mb v} \circ q_{\mb r}= 0,  \, q_{\mb v} \in \mathrm{Hom}(\torus^n, \torus), \,  q_{\mb v} \ne 0 \}.
$$
where $0$ refers to a zero map and where we define our norm $\| \cdot \|$ on  $\mathrm{Hom}(\torus^n, \torus)$
by the formula $\| q_{\mb v} \| = \| \mb v \|_{\infty}$.
In other words, we measure the complexity of $q_{\mb r}$ by finding the smallest nonzero
homomorphism $q_{\mb v}$ that ``annihilates'' %kills'' 
$q_{\mb r}$ in the sense of having the image of $q_{\mb r}$
be contained in the kernel of~$q_{\mb v}$.

In honor of Boyd we call this measure of complexity $\mu\left(q_{\mb r}\right)$ the \emph{Boyd height}
of the homomorphism $q_{\mb r}$.
In \cite[page~1409]{Brunault2024} this notion of height is generalized to matrices $A \in M_{n, m}(\bZ)$
which we express as follows:\footnote{The authors of \cite{Brunault2024} write $\rho(A)$
for what we call $\mu(A)$, except we switch the role of rows and columns in the matrix~$A$.
So their $\rho(A)$ is expressed in terms of right multiplication by a column vector~$\mb v$, while
we use left multiplication by a row vector $\mb v$. We do this switch to emphasize the
connection to homomorphisms.
Also, our use of $\mu$ reflect Boyd's original paper.
We use the term ``Boyd height'' to encompass this generalization by~\cite{Brunault2024} although
the designation ``Boyd--Brunault--Guilloux--Mehrabdollahei--Pengo height''  might be more
accurate.}
$$
\mu\left(A\right) 
\; \defeq \;  
\min 
\left\{ \| \mb v\|_\infty  \colon
    \text{$\mb v \in \bZ^n, \mb v \ne {\mb 0}$, and $\mb v \cdot A = \mb 0$} 
 \right\}.
$$
This also can be phrased in terms of homomorphisms which is our preferred formulation:

\begin{definition}\label{boyd_height_definition}
Let the map~$q_{A} \colon \torus^m \to \torus^n$ be the continuous homomorphism associated to a 
matrix~$A \in M_{n, m}(\bZ)$. The \emph{Boyd height} $\mu\left(q_A\right)$ of the homomorphism~$q_A$ is defined as follows:
$$
\mu\left(q_A\right) \; \defeq \;   \min  \left\{ \| q_{\mb v}  \| \colon q_{\mb v} \circ q_{A}= 0,  \, q_{\mb v} \in \mathrm{Hom}(\torus^n, \torus), \,  q_{\mb v} \ne 0 \right\}.
$$
As above, $0$ refers to a zero map, and we define our norm $\| \cdot \|$ on $\mathrm{Hom}(\torus^n, \torus)$
by the formula~$\| q_{\mb v} \| \; \defeq \; \| \mb v \|_{\infty}$ where $\| \mb v \|_{\infty}$
is the absolute value of the largest coordinate of $\mb v$.

If no such $q_{\mb v}$ exists (so the above set is empty), then define $\mu\left(q_A\right)$ to be $\infty$.
\end{definition}

As expected, Boyd heights are unbounded. In fact, we have the following mild generalization of~\citep[p.~118]{boyd1981}:

\begin{proposition} \label{unbounded_lemma}
For any positive integers $m, n$,  the set of values of the Boyd height 
on~$\mathrm{Hom}(\torus^m, \torus^n)$ is unbounded.
In fact, if $b$ is any given positive integer and if 
$$
A = \begin{bmatrix}
1 & 1 & \cdots & 1 \\
b & b & \cdots & b \\
\vdots & \vdots & \cdots & \vdots \\
b^{n-1} & b^{n-1} & \cdots & b^{n-1}
\end{bmatrix},
$$
then
$$
\mu\left(q_{A}\right)  = b.
$$
\end{proposition}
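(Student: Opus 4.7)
The plan is to exploit the very special structure of the given matrix $A$: every column of $A$ is identical to the single vector $\mathbf{c} = (1, b, b^2, \ldots, b^{n-1})^T$. Consequently, for any row vector $\mathbf{v} = (v_1, \ldots, v_n) \in \bZ^n$, the product $\mathbf{v} \cdot A$ is the row vector whose every entry equals $\mathbf{v} \cdot \mathbf{c} = \sum_{i=1}^{n} v_i b^{i-1}$. The condition $\mathbf{v} \cdot A = \mathbf{0}$ appearing in Definition~\ref{boyd_height_definition} therefore collapses to the single scalar equation $\sum_{i=1}^n v_i b^{i-1} = 0$, and computing $\mu(q_A)$ amounts to minimizing $\|\mathbf{v}\|_\infty$ over nonzero integer solutions of this one equation.

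For the upper bound $\mu(q_A) \leq b$, I would simply exhibit the explicit annihilator $\mathbf{v} = (b, -1, 0, \ldots, 0)$, which obviously solves the scalar equation and has $\|\mathbf{v}\|_\infty = b$. (This uses $n \geq 2$; for $n = 1$ no nonzero annihilator exists and $\mu(q_A) = \infty$, still consistent with the unboundedness claim.)

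For the lower bound $\mu(q_A) \geq b$, the key idea is a \emph{balanced base-$b$ uniqueness} argument. Assume for contradiction that some nonzero $\mathbf{v} \in \bZ^n$ satisfies the scalar equation and obeys $|v_i| \leq b - 1$ for every $i$. Let $k$ be the smallest index with $v_k \neq 0$. Rewriting the relation as
$$
v_k b^{k-1} \; = \; -\sum_{i=k+1}^{n} v_i b^{i-1} \; = \; -b^{k}\sum_{i=k+1}^{n} v_i b^{i-k-1}
$$
and dividing through by $b^{k-1}$ shows that $b$ divides $v_k$, which together with $0 < |v_k| \leq b-1 < b$ is the desired contradiction. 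Combining the two bounds gives $\mu(q_A) = b$, and since $b$ is an arbitrary positive integer the unboundedness assertion follows immediately.

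There is no real obstacle in this argument; the only mild subtlety is the "pick the smallest nonzero coordinate" trick in the lower bound, which is essentially the classical statement that $0$ has a unique representation in base $b$ with digits of absolute value less than $b$. The reduction to a scalar equation in the first paragraph is what makes the whole computation trivial, and is the reason the authors chose columns that are all equal.
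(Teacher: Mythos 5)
Your proposal is correct and follows essentially the same strategy as the paper: exhibit an explicit annihilator of sup-norm $b$ for the upper bound, and argue that any integer annihilator with all entries of absolute value $< b$ must be zero for the lower bound. The only difference is cosmetic: the paper proves the lower bound by decomposing $\mathbf{v} = \mathbf{v}^+ - \mathbf{v}^-$ into nonnegative parts and invoking uniqueness of base-$b$ expansions, whereas you pick the smallest nonzero coordinate $v_k$ and derive $b \mid v_k$ directly; these are two standard phrasings of the same balanced-base-$b$ uniqueness fact. You also explicitly handle the $n=1$ edge case (where $\mu(q_A) = \infty$ rather than $b$), which the paper leaves implicit.
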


\begin{proof}
If $\mb v = (-b, 1, 0, \ldots, 0)$ then $\mb v \cdot A = 0$ and $\| \mb v \|_\infty = b$. So $\mu\left(q_{A}\right)  \le b$. 

In order to show that~$\mu\left(q_{A}\right)  \ge b$, suppose
  that $\mb v \cdot A = 0$ for some $\mb v$ with $\| \mb v \|_\infty < b$.
 If~$b = 1$ then this means $\mb v = 0$ and we are done; so we can assume $b > 1$.
Since $\|\mb v\|_\infty < b$, we can write~$\mb v = \mb v^+ - \mb v^{-}$ where each coordinate
of  $ \mb v^+$ and $ \mb v^-$ is in the set $\{ 0, 1, \ldots, b-1\}$. 
From~$\mb v \cdot A = 0$ we have  $\mb v^+ \cdot A = \mb v^- \cdot A$. This means $N = \mb v^+ \cdot \mb r = \mb v^- \cdot \mb r$ where $\mb r$ is the column vector with entries $(1, b, b^2, \ldots, b^{n-1})$.
So we have two base $b$ expressions for $N$. 
By uniqueness of base $b$-expansions, $\mb v^+  = \mb v^-$. In other words, $\mb v = \mb v^+ - \mb v^{-} = 0$.
\end{proof}

\begin{remark}
Although our preferred formulation emphasizes homomorphisms, 
we will still sometimes write the Boyd height $\mu\left(q_A\right)$ simply as $\mu(A)$ and 
the vector version $\mu\left( q_{\mb r}\right)$ as~$\mu(\mb r)$ in contexts where the matrices are the focus. 
For example, one might be tempted to 
write~``$\mu(A) = b$''
in the above lemma since it is conceptually a fact about matrices.
\end{remark}

Note we allow $\mu\left( q_A \right)$ to be infinite. This occurs in the following situation:

\begin{proposition} \label{infinite_height_lemma}
Let $q_A \colon \torus^m \to \torus^n$ be a continuous homomorphism where~$A \in M_{n, m}(\bZ)$.
Then the following are equivalent.
\begin{enumerate}
\item
The Boyd height $\mu\left( q_A \right) = \mu(A)$ is infinite.
\item
The matrix $A$ has rank $n$.
\item
The homomorphism $q_A \colon \torus^m \to \torus^n$ is surjective.
\end{enumerate}
In particular, a necessary condition for the height $\mu\left( q_A \right)$ to be infinite is $n \le m$.
\end{proposition}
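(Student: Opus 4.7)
The proposal is to prove the equivalences in a cycle (1) $\Leftrightarrow$ (2) $\Leftrightarrow$ (3), and then deduce the dimensional inequality as a corollary of (2).

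First I would handle (1) $\Leftrightarrow$ (2). Unwinding Definition~\ref{boyd_height_definition}, one has $\mu(q_A) = \infty$ exactly when there is no nonzero $\mathbf v \in \bZ^n$ with $\mathbf v \cdot A = \mathbf 0$, i.e.\ when the left integer null space of $A$ is trivial. Because $A$ has integer (hence rational) entries, a nonzero rational solution to $\mathbf v \cdot A = \mathbf 0$ can be rescaled by clearing denominators to a nonzero integer solution, so the left integer null space is trivial if and only if the left $\bQ$-null space is trivial. This latter condition is precisely that the rows of $A$ are $\bQ$-linearly independent, which is equivalent to $A$ having rank $n$.

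Next I would handle (2) $\Leftrightarrow$ (3), making essential use of the commutative diagram from Section~\ref{Section_Homomorphisms} and the composition law in Proposition~\ref{comp_prop}. For (2) $\Rightarrow$ (3): if $\mathrm{rank}(A) = n$, then $\tilde A \colon \bR^m \to \bR^n$ is surjective as a linear map. Composing with the surjective exponential $\bR^n \to \torus^n$ and using commutativity of the diagram, the map $q_A \circ \exp \colon \bR^m \to \torus^n$ is surjective, so $q_A$ is surjective. For the contrapositive (not (2)) $\Rightarrow$ (not (3)): if $\mathrm{rank}(A) < n$ then, as in the previous paragraph, there exists a nonzero $\mathbf v \in \bZ^n$ with $\mathbf v \cdot A = \mathbf 0$. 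By Proposition~\ref{comp_prop}, $q_{\mathbf v} \circ q_A = q_{\mathbf v \cdot A} = q_{\mathbf 0}$ is the trivial homomorphism, so $\mathrm{im}(q_A) \subseteq \ker(q_{\mathbf v})$. Since $\mathbf v \ne \mathbf 0$, the character $q_{\mathbf v} \colon \torus^n \to \torus$ is non-trivial, hence its kernel is a proper closed subgroup of $\torus^n$, and $q_A$ is not surjective.

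Finally, the last clause is immediate from (2): an $n \times m$ matrix can only have rank $n$ when $n \le m$, so $n \le m$ is necessary for $\mu(q_A) = \infty$.

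I do not anticipate a genuine obstacle here; each step is a short translation between the three encodings of $A$ (integer matrix, linear map on $\bR^m$, homomorphism of tori). The only subtle point worth highlighting in the write-up is the passage from rational to integer null vectors when going between rank and the integrality condition in Definition~\ref{boyd_height_definition}, and the invocation of Proposition~\ref{comp_prop} to turn a linear-algebraic annihilator $\mathbf v$ into an annihilating character $q_{\mathbf v}$ on the torus side.
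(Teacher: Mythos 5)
Your proposal is correct and takes essentially the same approach as the paper, which also passes through the rank condition via rank-nullity (over $\bQ$, then clearing denominators to land in $\bZ^n$) and uses the commutative diagram with $\tilde A$ to get surjectivity from full rank. The only organizational difference is cosmetic: the paper closes the cycle $(1)\Rightarrow(2)\Rightarrow(3)\Rightarrow(1)$, proving $(3)\Rightarrow(1)$ directly (a nonzero annihilating $q_{\mathbf v}$ composed with a surjection forces $q_{\mathbf v}=0$), whereas you prove the contrapositive $\lnot(2)\Rightarrow\lnot(3)$ by noting $\mathrm{im}(q_A)\subseteq\ker(q_{\mathbf v})$ is proper; these are the same observation viewed from opposite ends.
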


\begin{proof}
(1) $\implies$ (2). We prove the contrapositive: suppose that $A$ has rank strictly less than $n$. Consider the linear map $\bQ^n \to \bQ^m$
defined by the rule~$\mb v \mapsto \mb v \cdot A$. The rank-nullity theorem implies that the null space has dimension $n - \mathrm{rank} (A) > 0$. Thus there
is a nonzero vector~$\mb v \in \bQ^n$ such that~$\mb v \cdot A = 0$. Replacing $\mb v$ by a suitable positive integer multiple of $\mb v$, we can assume that~$\mb v \in \bZ^n$.
Since $\mb v \cdot A = 0$ we have $\mu(A) \le \| \mb v \|_\infty < \infty$.

(2) $\implies$ (3). If $A$ has rank $n$ then the corresponding linear transformation $\tilde A \colon \bR^m \to \bR^n$ must have image all of $\bR^n$. This forces the induced map $q_A$
to be surjective as well.

(3) $\implies$ (1).  Suppose on the contrary that $\mu(q_A)$ is finite. This means that there is a nonzero $q_{\mb v} \in\mathrm{Hom}(\torus^n, \torus)$ such that $q_{\mb v} \circ q_A$ is the zero map.
But  $q_A$ is surjective, so $q_{\mb v}$ must be the zero map, a contradiction.
\end{proof}

We can enforce lower bounds for the Boyd-height 
of a composition:

\begin{proposition}\label{TP_prop}
Let $q_{A} \colon \torus^n \to \torus^m$ and $q_{B} \colon \torus^m \to \torus^l$ be continuous homomorphisms and
let~$b \in \bR$ be a given bound.
If \ 
$\mu \left( q_{A} \right) \ge l\|B\|_\infty b$ \ 
and  \ 
$\mu \left( q_{B} \right) \ge  b$ 
\ 
then  
$$
\mu  \left( q_{B} \circ q_{ A} \right)  \ge b.
$$
\end{proposition}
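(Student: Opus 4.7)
The plan is to unwind the definition of Boyd height on $q_{B}\circ q_{A}=q_{BA}$ (using Proposition~\ref{comp_prop}) and show that any ``annihilator'' of $q_{BA}$ must have $\infty$-norm at least $b$, splitting into two cases according to whether the annihilator also annihilates $q_B$ already.

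First I would take an arbitrary nonzero $\mb v \in \bZ^l$ satisfying $\mb v \cdot (BA) = \mb 0$, and set $\mb w \defeq \mb v \cdot B \in \bZ^m$. Note $A \in M_{m,n}(\bZ)$ and $B\in M_{l,m}(\bZ)$, so $\mb w$ is the natural intermediate row vector, and by associativity $\mb w \cdot A = \mb v \cdot B A = \mb 0$. Then I would split into two cases.

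\emph{Case 1:} $\mb w = \mb 0$. Then $\mb v$ is a nonzero element of $\bZ^l$ with $\mb v \cdot B = \mb 0$, so by the definition of Boyd height applied to $q_B$, we get $\|\mb v\|_\infty \ge \mu\left(q_B\right) \ge b$.

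\emph{Case 2:} $\mb w \ne \mb 0$. Then $\mb w$ witnesses an annihilator of $A$, so $\mu\left(q_A\right) \le \|\mb w\|_\infty$. The routine estimate $|\mb w_j| = \bigl|\sum_{i=1}^{l} v_i B_{ij}\bigr| \le l\,\|\mb v\|_\infty\,\|B\|_\infty$ gives $\|\mb w\|_\infty \le l\,\|B\|_\infty\,\|\mb v\|_\infty$. Combining with the hypothesis $\mu\left(q_A\right) \ge l\,\|B\|_\infty\, b$ yields $l\,\|B\|_\infty\, b \le l\,\|B\|_\infty\,\|\mb v\|_\infty$, hence $\|\mb v\|_\infty \ge b$. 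Since $\mb v$ was an arbitrary nonzero annihilator of $BA$, this lower bound passes to the minimum, giving $\mu\left(q_{BA}\right) \ge b$.

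There is no real obstacle here; the only delicate point is keeping straight that the Boyd-height ``test vectors'' for $q_{BA}$ live in $\bZ^l$, and that the natural substitution $\mb w = \mb v \cdot B$ converts such a test vector into a test vector for $q_A$ (unless it collapses to zero, which is exactly what puts us in Case 1). The factor $l\,\|B\|_\infty$ in the hypothesis is explained by—and exactly compensates for—the crude row-times-matrix bound $\|\mb v \cdot B\|_\infty \le l\,\|\mb v\|_\infty\,\|B\|_\infty$ used in Case 2.
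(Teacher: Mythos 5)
Your proof is correct and follows essentially the same approach as the paper's: fix a nonzero annihilator $\mb v$ of $BA$, form $\mb w = \mb v \cdot B$, and split on whether $\mb w$ vanishes, using $\mu(q_B)$ in one case and $\mu(q_A)$ plus the crude bound $\|\mb v \cdot B\|_\infty \le l\|\mb v\|_\infty\|B\|_\infty$ in the other. The only cosmetic difference is the order of the two cases.
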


\begin{proof}
We work with the underlying matrices:
we assume $A\in M_{m,n}(\ZZ)$ and $B\in M_{l,m}(\ZZ)$ are such that $\mu(A)\geq l\|B\|_\infty b$ and $\mu(B)\geq  b$ where $b$ is the given bound,
and we want to show that $\mu(BA) \ge b$.
It suffices to show that 
if $\mathbf{v}\in\ZZ^l$ 
is a nonzero vector such that~$\mathbf{v}\cdot BA=\mathbf{0}$ then $\| \mb v \|_\infty \ge b$.
(If no such nonzero vector $\bfv$ exists, then $\mu(BA)=\infty \ge b$, and we are done).

We have two cases depending on $\bfv \cdot B$.

\noindent \underline{Case 1:} $\mathbf{v}\cdot B\neq \mathbf{0}$.  Let $\mathbf{v}\cdot B=\mathbf{w}=(w_i)_{1\leq i \leq m}$. 
Here $\mb w \cdot A = \mb 0$ so 
$\|\mathbf{w}\|_\infty\geq \mu(A)\geq l\|B\|_\infty b$.
Let $j$ be such that $|w_j|=\|\mathbf{w}\|_\infty$, and let $\mathbf{b}_j = (b_{ij})$ be the $j$th column of $B$.  Then 
\begin{align*}
l\|B\|_\infty b \leq  |w_j| = |\mathbf{v}\cdot \mathbf{b}_j| = \left |\sum_{i=1}^l v_ib_{ij}\right |\leq  \sum_{i=1}^l |v_ib_{ij}| \leq  \sum_{i=1}^l|v_i|\|B\|_\infty  \leq&  \sum_{i=1}^l\|\mathbf{v}\|_\infty\|B\|_\infty\\ 
 = &l\|\mathbf{v}\|_\infty\|B\|_\infty.
\end{align*}
Thus, $l\|B\|_\infty b\leq l\|\mathbf{v}\|_\infty\|B\|_\infty $ and therefore $\|\mathbf{v}\|_\infty\geq b.$

\medskip\noindent \underline{Case 2:} 
$\mathbf{v}\cdot B=\mathbf{0}.$  In this case, since $\mathbf{v}$ is nonzero, we must have 
\[\|\mathbf{v}\|_\infty\geq\mu(B)\geq b.\]

\medskip\noindent
In either case, $\|\mathbf{v}\|_\infty\geq b$
as desired. 
\end{proof}

%%%%%
%%%%%
%%%%%

\section{Boyd--Lawton Collections} \label{section_BL_Collections}

The purpose of this section is to verify that the four collections of functions discussed 
in the introduction are indeed Boyd--Lawton collections (Definition~\ref{BL collection def}) and so are subject to our main theorem (Theorem~\ref{main_theorem} above). %The reader is directed to Definition~\ref{BL collection def} for the definition of a Boyd--Lawton collection.

Before we do this, we need as a tool a basic result about nonvanishing of Laurent polynomials under substitution. For this we need to extend the notation $P^{(\mb r)}$ for power
substitution to matrices $A$.
Suppose that~$P \in \bC\left[Z^{\pm 1}_1, \ldots, Z^{\pm 1}_n\right]$ is a Laurent polynomial of $n$ variables and $A\in M_{n,m}(\bZ)$ is a matrix.
We form the Laurent polynomial $P^{(A)}$ of $m$ variables:
$$
P^{(A)}(Z_1, \ldots, Z_m) \; \defeq \; 
P\left( Z_1^{a_{11}}  Z_2^{a_{12}} \cdots  Z_m^{a_{1m}},  \;  \ldots \; , \; Z_1^{a_{n1}}  Z_2^{a_{n2}} \cdots  Z_m^{a_{nm}} 
\right).
$$
This definition was chosen to be compatible with the composition $P \circ q_A$.
More precisely, although $P$ and $P^{(A)}$ are technically algebraic objects, namely Laurent polynomials, we can slightly abuse notation and think of them as functions on $\torus^n$ and $\torus^m$ respectively. In this case we see that
$$
P^{(A)} = P \circ q_A
$$
as a function on  the torus $\torus^m$.\footnote{Although $P^{(A)}$ and $P \circ q_A$ are interchangeable when regarded as functions on $\torus^m$,  we still find it important to keep both notations.
As a purely algebraic Laurent polynomial, the functional composition notation $P \circ q_A$ is misleading, so we will use $P^{(A)}$ in algebraic settings. For example, Mahler measures and their extensions are
viewed as invariants of Laurent polynomials themselves and not of functions on tori, so we like to use~$P^{(A)}$ in the context of such Mahler measures. We prefer $P \circ q_A$ in analytic settings such as in integrals on~$\torus^m$.

Finally, we note that other authors use $P_{\mb r}$ and $P_A$ for what we call  $P^{(\mb r)}$ and $P^{(A)}$. We like this superscript notation, with parenthesis, since it is suggestive of the role of the coordinates
of $\mb r$ and $A$ as exponents, and the parentheses reminds us that these are applied as a substitution inside the Laurent polynomial $P$. Also superscripts are good when you have Laurent polynomials $P_i$ denoted with subscripts already in use; we avoid awkward expressions such as $P_{i \mb r}$, using the more pleasant  $P^{(\mb r)}_i$ instead.}

\begin{lemma} \label{Pqr_nonvanishing}
Let $P \in \bC\left[Z^{\pm 1}_1, \ldots, Z^{\pm 1}_n\right]$ be a nonzero Laurent polynomial. Then
there is bound~$M$ such that $P^{(A)}$ is not zero if $A$ is an integer matrix with $n$ rows such that~$\mu(A) \ge M$. 
\end{lemma}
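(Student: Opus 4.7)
The plan is to reduce the question to the injectivity of a certain map on the support of $P$, and then use the definition of the Boyd height to prevent the obstructions to injectivity. Write $P = \sum_{\mb e \in S} c_{\mb e} Z_1^{e_1}\cdots Z_n^{e_n}$, where $S = \{\mb e \in \bZ^n : c_{\mb e} \ne 0\}$ is the (finite) support of $P$. Substituting $Z_i \mapsto Z_1^{a_{i1}}\cdots Z_m^{a_{im}}$ and collecting exponents, each monomial $Z^{\mb e}$ becomes $Z_1^{w_1}\cdots Z_m^{w_m}$ where $\mb w = \mb e \cdot A$ (viewing $\mb e$ as a row vector and multiplying on the right by $A$). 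Hence
$$
P^{(A)} \; = \; \sum_{\mb e \in S} c_{\mb e} \, Z_1^{(\mb e\cdot A)_1} \cdots Z_m^{(\mb e\cdot A)_m}.
$$

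Next I would observe that if the map $\mb e \mapsto \mb e \cdot A$ is injective on $S$, then the monomials appearing in the sum above are pairwise distinct, so no cancellation is possible and $P^{(A)} \ne 0$ (the coefficients $c_{\mb e}$ are all nonzero by assumption). Failure of injectivity would mean $(\mb e - \mb e')\cdot A = \mb 0$ for some pair of distinct $\mb e, \mb e' \in S$, i.e., there is a nonzero vector $\mb v \in S - S \subset \bZ^n$ with $\mb v \cdot A = \mb 0$.

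Now I would invoke the definition of the Boyd height: any such nonzero $\mb v$ must satisfy $\|\mb v\|_\infty \ge \mu(A)$. Set
$$
M \; \defeq \; 1 + \max\bigl\{\, \|\mb e - \mb e'\|_\infty : \mb e, \mb e' \in S \,\bigr\},
$$
which is a finite constant depending only on $P$ (and which we can take to be $1$ in the degenerate case $|S|=1$, where $P^{(A)}$ is automatically a single nonzero monomial). Then $\mu(A) \ge M$ forces every nonzero $\mb v \in S - S$ to satisfy $\mb v \cdot A \ne \mb 0$, the map $\mb e \mapsto \mb e \cdot A$ is injective on $S$, and so $P^{(A)} \ne 0$.

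I do not expect a serious obstacle here: once one unpacks what it means for $P^{(A)}$ to vanish (cancellation of monomials after exponent substitution), the Boyd height is precisely designed to rule out the integer relations that could produce such a cancellation. The only mild care needed is the corner case where $P$ is a single monomial, and the observation that $S-S$ is a finite subset of $\bZ^n$, so the maximum defining $M$ is attained.
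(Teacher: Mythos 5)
Your proof is correct, and it takes a genuinely different route from the paper's. The paper deduces the lemma indirectly: it appeals to the assertion, implicit in the single-variable Boyd--Lawton theorem, that $P\circ q_{\mb t}$ is not identically zero on $\torus$ once $\mu(\mb t)$ is large; it then composes $q_A$ with a suitable $q_{\mb r}\colon\torus\to\torus^m$ of high height (via Proposition~\ref{unbounded_lemma}), invokes the height inequality for compositions (Proposition~\ref{TP_prop}) to conclude $\mu(q_A\circ q_{\mb r})$ is large, and finally infers that $P\circ q_A$ itself cannot vanish. Your argument instead works directly with the support $S$ of $P$: you identify vanishing of $P^{(A)}$ with a collision $\mb e\cdot A=\mb e'\cdot A$ among exponent vectors, note that this produces a nonzero integer vector $\mb e-\mb e'$ annihilating $A$ from the left, and observe that the Boyd height is \emph{defined} to bound below the $\ell^\infty$-norm of any such annihilator. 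Taking $M$ one more than the diameter of $S$ in $\ell^\infty$ then forces injectivity on $S$ and hence nonvanishing. Your approach is more elementary and self-contained --- it uses only the definition of $\mu$ and requires neither the Boyd--Lawton theorem nor Proposition~\ref{TP_prop} --- and it has the further advantage of producing an explicit bound $M$ depending only on the Newton polytope of $P$, whereas the paper's bound is inherited opaquely from the Boyd--Lawton limit statement. The paper's route fits better into its overall strategy of leveraging the single-variable theorem as a black box, but your direct argument is arguably the more illuminating proof of this particular lemma.
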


\begin{proof}
Implicit in the classical Boyd--Lawton theorem is the assertion that
there is a bound $M$ such that 
$\log | P| \circ q_{\mb t} = \log| P \circ q_{\mb t} |$ defines an integrable function on~$\torus$
for any $\mb t \in \bZ^n = M_{n,1}(\bZ)$
such that~$\mu({\mb t}) \ge M$. 
In particular, the function~$P \circ q_{\mb t}$
cannot be identically zero on $\torus$ when~$\mu({\mb t}) \ge M$.

Suppose $A \in M_{n, m}(\bZ)$ is such that $\mu(A) \ge M$.
Choose $\mb r \in \bZ^m$ so that $\mu\left(\mb r\right) \ge n \|A\|_\infty M$, and
let $q_{\mb t}$ be the composition $q_A \circ q_{\mb r}$.
Such an $\mb r$ exists by Proposition~\ref{unbounded_lemma}.
By Proposition~\ref{TP_prop}, we have $\mu(q_{\mb t}) \ge M$, meaning
that the composition~$P \circ q_{\mb t} = P \circ q_A \circ q_{\mb r}$ is not
identically zero on~$\torus$. This in turn prevents $P \circ q_A$ from being
identically zero on~$\torus^m$. 
As functions on~$\torus^m$, we have $P \circ q_A = P^{(A)}$, so we conclude
that $P^{(A)}$ is a nonzero Laurent polynomial.
\end{proof}

\subsection{The Case of Continuous Functions}

Our first Boyd--Lawton collection is the collection of continuous functions on real tori. Let $\mc C_1 (\torus^n)$ be the collection of continuous functions~$\torus^n \to \bC$.

\begin{proposition} \label{prop: C_1}
The set  $\mc C_1 = \bigcup_{n \ge 1} \mc C_1 (\torus^n)$ is a Boyd--Lawton collection
\end{proposition}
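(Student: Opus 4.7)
The plan is to verify each of the three conditions in Definition~\ref{BL collection def} directly, which in this case is essentially routine.

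First, for condition~(1), continuous complex-valued functions on the compact space $\torus^n$ are bounded, and since $\torus^n$ has finite total measure under $\tau_n$, every $g \in \mc C_1(\torus^n)$ is automatically in $L^1(\torus^n)$. So integrability is immediate.

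Second, for condition~(3), closure under composition by homomorphisms is immediate: if $g \colon \torus^n \to \bC$ is continuous and $q_A \colon \torus^m \to \torus^n$ is a continuous homomorphism, then $g \circ q_A \colon \torus^m \to \bC$ is a composition of continuous maps and hence continuous. Thus $g \circ q_A \in \mc C_1(\torus^m)$ for \emph{every} $q_A$, so we may take the required bound $B$ to be, say, $0$, and the condition is trivially satisfied.

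The only nontrivial condition is~(2), the single-variable Boyd--Lawton theorem for continuous functions. But this case is precisely the content of \cite[Lemma~1]{boyd1981}, which asserts that for any continuous $g \colon \torus^n \to \bC$ with $n \ge 2$, one has
$$
\int_{\torus^n}  g \, d\tau_n= \lim_{\mu(q_\mb r)\to \infty}\;  \int_\torus g \circ q_{\mb r} \, d \tau,
$$
as $\mb r$ ranges over nonzero integer lattice vectors. Hence one simply invokes this classical result. The ``obstacle,'' such as it is, is to be sure that the formulation we cite from~\cite{boyd1981} matches the formulation in condition~(2), including the convention that the limit is taken over all $\mb r \in \bZ^n$ (allowing both signs) rather than just positive $\mb r$; however, since continuous functions on $\torus^n$ behave symmetrically under coordinate-wise inversion (which is just a homeomorphism of $\torus^n$), this poses no difficulty. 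Combining the three verifications shows that $\mc C_1 = \bigcup_{n \ge 1} \mc C_1(\torus^n)$ is a Boyd--Lawton collection.
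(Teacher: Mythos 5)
Your proof is correct and follows essentially the same route as the paper: verify integrability from compactness, invoke Boyd's Lemma~1 for condition~(2), and note closure of continuity under composition for condition~(3). The extra remark about sign conventions in Boyd's lemma is a reasonable precaution but does not change the substance of the argument.
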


\begin{proof}
Continuous functions are integrable on $\torus^n$ since $\torus^n$ is compact and $\tau_n$ is the Haar measure, so part~(1) of Definition~\ref{BL collection def} is satisfied.

For a continuous $g \colon \torus^n \to \bC$ with $n \ge 2$ we have
$$
\int_{\torus^n}  g \, d\tau_n= \lim_{\mu(\mb r)\to \infty}\;  \int_\torus g \circ q_{\mb r} \, d \tau
$$
by~\cite[Lemma~1]{boyd1981}. 
So part~(2) of Definition~\ref{BL collection def} is satisfied.

Finally, since the composition of continuous functions is continuous, 
part~(3) of Definition~\ref{BL collection def} is satisfied.
\end{proof}

%%%%

\subsection{The Case of Classical Mahler Measures}

Our second Boyd--Lawton collection is the usual collection of functions on $\torus^n$ used to define Mahler measures of Laurent polynomials.
Let $\mc C_2 (\torus^n)$ be the collection of functions of the form
$$
(z_1, \ldots, z_n) \mapsto \log | P(z_1, \ldots, z_n) |
$$
where $P \in \bC\left[Z^{\pm 1}_1, \ldots, Z^{\pm 1}_n\right]$ is a nonzero Laurent polynomial.

\begin{proposition}\label{prop: C_2}
The set  $\mc C_2 = \bigcup_{n \ge 1} \mc C_2 (\torus^n)$ is a Boyd--Lawton collection
\end{proposition}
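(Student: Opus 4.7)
The plan is to verify the three conditions of Definition~\ref{BL collection def} in turn, with the third being the condition where Lemma~\ref{Pqr_nonvanishing} is essential.

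First, for condition (1), I would note that for any nonzero Laurent polynomial $P \in \bC\left[Z_1^{\pm 1}, \ldots, Z_n^{\pm 1}\right]$, the zero set of $P$ on $\torus^n$ has $\tau_n$-measure zero (a standard consequence of the fact that a nonzero Laurent polynomial, viewed as a real-analytic function on $\torus^n$, cannot vanish on a set of positive measure), so $\log|P|$ is defined almost everywhere and complex-valued (indeed, real-valued) there. Integrability, i.e.\ the finiteness of $\int_{\torus^n} |\log|P|| \, d\tau_n$, is the classical finiteness of Mahler measure; this can be established either by a direct Jensen's formula argument applied one variable at a time, or by citing any standard reference on Mahler measures (for instance~\cite{BertinLalin}).

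For condition (2), the hypothesis is precisely the classical single-variable Boyd--Lawton theorem, stated here in the form
\[
\int_{\torus^n} \log|P|\, d\tau_n = \lim_{\mu(q_{\mb r})\to\infty} \int_{\torus} \log|P|\circ q_{\mb r}\, d\tau
\]
for $n \ge 2$, which is due to Boyd~\cite{boyd1981} and Lawton~\cite{lawton1983}; the implicit assertion that $\log|P\circ q_{\mb r}|$ is integrable on $\torus$ for all sufficiently large $\mu(q_{\mb r})$ is contained in the same results.

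For condition (3), suppose $g \in \mc C_2(\torus^n)$ has the form $g(z) = \log|P(z)|$. For any $q_A \colon \torus^m \to \torus^n$, the composition $g \circ q_A$ equals $\log|P^{(A)}|$ as a function on $\torus^m$, since $P \circ q_A = P^{(A)}$. By Lemma~\ref{Pqr_nonvanishing}, there is a bound $M$ (depending only on $P$) such that $P^{(A)}$ is a nonzero Laurent polynomial whenever $\mu(q_A) \ge M$. For any such $A$, the composition $g \circ q_A = \log|P^{(A)}|$ is then an element of $\mc C_2(\torus^m)$, which gives the required bound $B = M$.

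The conditions are all rather direct once Lemma~\ref{Pqr_nonvanishing} is in hand, so I do not anticipate a serious obstacle; the only subtle point is condition (3), where without the lemma one would have no guarantee that $P^{(A)}$ stays nonzero when $A$ acquires a nontrivial kernel, and hence $\log|P^{(A)}|$ could fail to be defined as a $\mc C_2$-function. The lemma is exactly what rules this out for large Boyd height.
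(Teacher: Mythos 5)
Your proof is correct and follows essentially the same route as the paper: condition~(1) is the standard finiteness of Mahler measure (the paper cites \cite[Proposition 3.1]{Brunault_Zudilin_2020} where you sketch the Jensen/real-analyticity argument), condition~(2) is the classical Boyd--Lawton theorem, and condition~(3) invokes Lemma~\ref{Pqr_nonvanishing} together with the identity $g \circ q_A = \log|P^{(A)}|$ exactly as the paper does.
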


\begin{proof}
Throughout this proof let $g$ be an arbitrary element of $\mc C_2$, so $g = \log |P|$ where~$P \in \bC\left[Z^{\pm 1}_1, \ldots, Z^{\pm 1}_n\right]$ is a nonzero Laurent polynomial. 

The integral
$$
\int_{\torus^n}  g \, d\tau_n = \int_{\torus^n}  \log |P| \, d\tau_n 
$$
is the Mahler measure of $P$. This is well-defined by a basic result on Mahler measures (see~\cite[Proposition 3.1]{Brunault_Zudilin_2020}). 
Thus part~(1) of Definition~\ref{BL collection def} is satisfied.

If $n\ge 2$, then the classical Boyd--Lawton theorem gives
$$
\int_{\torus^n}  g \, d\tau_n= \lim_{\mu(\mb r)\to \infty}\;  \int_\torus g \circ q_{\mb r} \, d \tau.
$$
Thus, part~(2) of Definition~\ref{BL collection def} is satisfied.

Finally, we appeal to Lemma~\ref{Pqr_nonvanishing} to get a bound $M$ such that $P^{(A)}$ is a nonzero Laurent polynomial for all
integer matrices $A$ with $n$ rows such that $\mu(A) \ge M$. Furthermore, as functions on $\torus^m$,
$$
\log \left| P^{(A)} \right| = \log \left | P \circ q_A \right| = \log |P| \circ q_A = g \circ q_A.
$$
Thus $g \circ q_A$ is in $\mc C_2(\torus^m)$ when $q_A \colon \torus^m \to \torus^n$ is such that $\mu\left( q_A \right) \ge M$. Part~(3) of Definition~\ref{BL collection def} is now established.
\end{proof}

%%%

\subsection{The Case of More General Mahler Measures}

Next, we consider functions associated with generalizations of Mahler measures.

In our third Boyd--Lawton collection $\mc C_3$, the functions $\mc C_3(\torus^n)$
are those of the form
$$
(z_1, \ldots, z_n) \mapsto  \max \left\{ 
    \log | P_1(z_1, \ldots, z_n) |, \ldots, \log| P_k(z_1, \ldots, z_n)| 
    \right\},
$$
where each $P_i$ is a nonzero  Laurent polynomial in the ring  $\bC[Z^{\pm 1}_1, \ldots, Z^{\pm 1}_n]$.
These are the functions related to \emph{generalized Mahler measures} introduced
in~\cite{GonOyanagi2004}.

In our fourth and final Boyd--Lawton collection $\mc C_4$, the functions $\mc C_4(\torus^n)$
are those of the form
$$
(z_1, \ldots, z_n) \mapsto  
    \log | P_1(z_1, \ldots, z_n) | \cdot \log | P_2(z_1, \ldots, z_n) | \cdots \log| P_k(z_1, \ldots, z_n)| ,
$$
where each $P_i$ is a nonzero  Laurent polynomial in~$\bC[Z^{\pm 1}_1, \ldots, Z^{\pm 1}_n]$.
These are the functions related to \emph{multiple higher Mahler measures} of~\cite{Kurokawa2008}.

\begin{proposition} \label{prop: C_3}
The sets $\mc C_3 = \bigcup \mc C_3 (\torus^n)$ and $\mc C_4 = \bigcup \mc C_4 (\torus^n)$ are Boyd--Lawton collections.
\end{proposition}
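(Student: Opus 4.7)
The plan is to verify the three conditions of Definition~\ref{BL collection def} in parallel for $\mc C_3$ and $\mc C_4$, modeling the argument on the proof of Proposition~\ref{prop: C_2} but substituting the $\max$ or product operation at each appropriate step. Let $g \in \mc C_j(\torus^n)$ for $j \in \{3,4\}$, say $g = \max\{\log|P_1|,\ldots,\log|P_k|\}$ or $g = \log|P_1| \cdots \log|P_k|$, where each $P_i \in \bC[Z_1^{\pm 1},\ldots,Z_n^{\pm 1}]$ is a nonzero Laurent polynomial.

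For condition~(1), recall that each $\log|P_i|$ lies in $L^p(\torus^n)$ for every $p < \infty$ (this is a standard consequence of the same estimates used to establish integrability of the classical Mahler measure; see e.g.~\cite[Proposition 3.1]{Brunault_Zudilin_2020}). For $\mc C_3$, the inequality $|\max_i a_i| \le \sum_i |a_i|$ shows that $g$ is dominated by $\sum_i |\log|P_i||$ and hence lies in $L^1(\torus^n)$. For $\mc C_4$, H\"older's inequality applied to the $k$-fold product of functions in $L^k(\torus^n)$ gives $g \in L^1(\torus^n)$.

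For condition~(2), the single-variable Boyd--Lawton statement for each of these function classes is precisely (the Laurent-polynomial extension of) Theorem~1.2 of~\cite{Issa2013}, with the product case also appearing as~\cite[Theorem 30]{LalinSinha}. I would simply cite this, noting that Section~\ref{extending_Issa} provides the needed generalization to Laurent polynomials and to lattice points $\mb r \in \bZ^n$ without sign restriction, so that the limit is genuinely along $\mu(q_{\mb r}) \to \infty$ as required in Definition~\ref{BL collection def}.

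For condition~(3), the key observation is that composition with $q_A$ commutes with both $\max$ and product: as functions on $\torus^m$,
\[
g \circ q_A \;=\; \max\bigl\{\log|P_1^{(A)}|,\ldots,\log|P_k^{(A)}|\bigr\}
\qquad\text{or}\qquad
g \circ q_A \;=\; \log|P_1^{(A)}| \cdots \log|P_k^{(A)}|,
\]
depending on whether $g \in \mc C_3$ or $g \in \mc C_4$. By Lemma~\ref{Pqr_nonvanishing}, for each $i$ there is a bound $M_i$ such that $P_i^{(A)}$ is a nonzero Laurent polynomial whenever $\mu(q_A) \ge M_i$. Taking $B \defeq \max_i M_i$, we conclude that if $\mu(q_A) \ge B$ then every $P_i^{(A)}$ is a nonzero Laurent polynomial in $m$ variables, so $g \circ q_A \in \mc C_j(\torus^m)$. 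The only mild subtlety I anticipate is being careful that the $\max_i M_i$ step really handles all $k$ polynomials uniformly, which is immediate since $k$ is finite and each $M_i$ is supplied by Lemma~\ref{Pqr_nonvanishing}; there is no interaction between the $P_i$'s to worry about.
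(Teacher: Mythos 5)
Your proof is essentially correct, and conditions (2) and (3) follow the paper's own argument closely: part (2) by appealing to the Laurent/sign-free strengthening of Theorem~1.2 of~\cite{Issa2013} (done in Section~\ref{extending_Issa}), and part (3) by invoking Lemma~\ref{Pqr_nonvanishing} for each $P_i$, taking $B = \max_i M_i$, and using the observation that $\max$ and product commute with composition by $q_A$. Where you diverge from the paper is condition~(1). The paper first replaces each Laurent polynomial $P_i$ by an honest polynomial $Q_i$ with $|P_i|=|Q_i|$ on $\torus^n$ via Lemma~\ref{lem: PolyLaurSwap1}, and then cites Theorem~4.1 of~\cite{Issa2013} for convergence of $\int_{\torus^n} g\, d\tau_n$. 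You instead argue from $L^p$ estimates: $|\max_i \log|P_i||\leq \sum_i|\log|P_i||$ for $\mc C_3$, and generalized H\"older on a product of $L^k$ functions for $\mc C_4$. That line of reasoning is sound and arguably cleaner, but the citation supporting it is off: \cite[Proposition~3.1]{Brunault_Zudilin_2020} gives only $L^1$-integrability of $\log|P|$, which suffices for $\mc C_3$ but not for the H\"older step in $\mc C_4$. The stronger fact $\log|P|\in L^p(\torus^n)$ for all finite $p$ is true and standard (and is effectively contained in the estimates of Section~4 of~\cite{Issa2013}), so you should either cite that explicitly or sketch the bound on the measure of the set where $|P|$ is small. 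With that citation fixed, your approach is a valid alternative to the paper's reduction to polynomials; both ultimately rest on the same local estimates near the vanishing locus of $P$.
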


\begin{proof}
If $g$ is in $\mc C_3 (\torus^n)$ or $\mc C_4 (\torus^n)$, then 
$
\int_{\torus^n}  g \, d\tau_n 
$
is $m_{n, \max}(P_1, \ldots, P_k)$ or $m_{n, \max}(P_1, \ldots, P_k)$ for some set of 
Laurent polynomials in $\bC[Z^{\pm 1}_1, \ldots, Z^{\pm 1}_n]$. By Lemma~\ref{lem: PolyLaurSwap1} below (as illustrated in the proof of Lemma~\ref{lem: PolyLaurSwap2}), we can assume that each $P_i$
is actually a polynomial. The convergence of $\int_{\torus^n}  g \, d\tau_n$  now follows from Theorem~4.1 of \cite{Issa2013}.
Therefore,  part~(1) of Definition~\ref{BL collection def} is satisfied.  

 Part~(2) of Definition~\ref{BL collection def} essentially follows from Theorem~1.2 of \cite{Issa2013}. However, this theorem of \cite{Issa2013}
 needs to be strengthened a bit, which we do below (Theorem~\ref{extendedIssa}). 
With this part~(2) of Definition~\ref{BL collection def} is satisfied.

The argument for part~(3) of Definition~\ref{BL collection def} is similar to the argument in Proposition~\ref{prop: C_2}.
If $g$ is in $\mc C_3 (\torus^n)$ or $\mc C_4 (\torus^n)$ then $g$ is built out of a set $P_1, \ldots, P_k$ of Laurent polynomials. 
We  we appeal to Lemma~\ref{Pqr_nonvanishing} to get a bound $M_i$ such that $P_i^{(A)}$ is a nonzero Laurent polynomial for all
integer matrices $A$ with $n$ rows such that $\mu(A) \ge M_i$.  If we simply take $M$ to be the maximum of the $M_i$ then
$P_1^{(A)}, \ldots, P_1^{(A)}$ are nonzero Laurent polynomials when $\mu(A) \ge M$.
Since $g \circ q_A$ is built out of $P_1^{(A)}, \ldots, P_1^{(A)}$, this composition will be in~$\mc C_3 (\torus^m)$ or $\mc C_4 (\torus^m)$ if $A$ is an $n$ by $m$ integer matrix with $\mu(A) \ge M$.
For example, if~$g \in \mc C_3(\torus^n)$ is given by
$$
g =   \max \left\{ 
    \log | P_1 |, \ldots, \log| P_k| 
    \right\},
$$
then for $A$ with $\mu(A) \ge M$ we can form the following function in $\mc C_3(\torus^m)$:
\begin{equation}\label{Eq. IdsUsed}
    \begin{split}
        \max \left\{ 
    \log \left| P^{(A)}_1 \right|, \ldots, \log\left| P^{(A)}_k \vphantom{P^{(A)}_1}\right| 
    \right\} 
    &= \max \left\{ 
    \log \left| P_1 \circ q_A \right|, \ldots, \log\left|  P_k\circ q_A\right|      \right\} \\
       &=
        \max \left\{  \log \left| P_1  \right|, \ldots, \log\left|  P_k\right|      \right\}  \; \circ \; q_A \\
        &=
        g \circ q_A.
    \end{split}
\end{equation}
%\begin{eqnarray*}
%\max \left\{ 
%    \log \left| P^{(A)}_1 \right|, \ldots, \log\left| P^{(A)}_k \vphantom{P^{(A)}_1}\right| 
%    \right\} 
%    &=& \max \left\{ 
%    \log \left| P_1 \circ q_A \right|, \ldots, \log\left|  P_k\circ q_A\right|      \right\} \\
%       &=&
%        \max \left\{  \log \left| P_1  \right|, \ldots, \log\left|  P_k\right|      \right\}  \; \circ \; q_A \\
%        &=&
%        g \circ q_A.
%\end{eqnarray*}
We conclude that  part~(3) of Definition~\ref{BL collection def} holds for $\mc C_3$, and similarly for $\mc C_4$.
\end{proof}

In what follows, we will freely use the identities in Equation \eqref{Eq. IdsUsed}.

%%%%%%%%%%%%%%%%%%%%%%%%%%%%%%%%%%%%%%%%

\subsection{Extending Issa and Lal\'in's version of Boyd--Lawton} \label{extending_Issa}

In order to complete the proof of Proposition~\ref{prop: C_3}, we need to strengthen Issa and Lal\'in's version of Boyd--Lawton: %(Theorem~1.2 of \cite{Issa2013}).

\begin{theorem} [Theorem~1.2 of \cite{Issa2013} and Theorem 30 of \cite{LalinSinha}] \label{IssaThm}
Let $P_1, \dots, P_k \in \bC [Z_1, \ldots, Z_n]$ be nonzero polynomials where $n \ge 2$. Then
    \[\lim_{\mu\left( {\mb r} \right)  \to \infty} m_{n, \max}\left( P_1^{(\mb r)}, \ldots,  P_k^{(\mb r)}\right)
    =m_{n, \max}( P_1, \ldots,  P_k)
    \]
    and
    \[
    \lim_{\mu\left( {\mb r} \right)  \to \infty} m_{n, \operatorname{prod}}\left( P_1^{(\mb r)}, \ldots,  P_k^{(\mb r)}\right)
    =m_{n, \operatorname{prod}}(P_1, \ldots, P_k),
    \]
    where in the limit the $\mb r = (r_1, \ldots, r_n)$ vary only over lattice points in $\bZ^n_{>0}$.
    
        In other words, if $g$ is either
    $$
    \max \{ \log|P_1|, \ldots, \log|P_k| \}
    \qquad\text{or}\qquad
    \ \log|P_1| \cdots \log|P_k| ,
    $$
    then
    $$
\int_{\torus^n}  g \, d\tau_n= \lim_{\mu(q_\mb r)\to \infty}\;  \int_\torus g \circ q_{\mb r} \, d \tau,
$$
 where in the limit the $\mb r = (r_1, \ldots, r_n)$ vary only over lattice points in $\bZ^n_{>0}$.
\end{theorem}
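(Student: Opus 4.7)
The plan is to observe that the theorem as stated is a direct restatement of two known results: the first limit is \cite[Theorem~1.2]{Issa2013} and the second is \cite[Theorem~30]{LalinSinha}. Both are stated there in exactly the form required---for ordinary polynomials $P_i \in \bC[Z_1, \ldots, Z_n]$, for lattice points $\mathbf{r}$ in $\bZ^n_{>0}$, and with the same Boyd height $\mu(\mathbf{r})$---so the proof proper reduces to a pair of citations, once one checks that our notation for $m_{n,\max}$, $m_{n, \operatorname{prod}}$, and $P^{(\mathbf{r})}$ agrees with the notation used in those sources. For the reader's convenience, however, I would outline the common strategy underlying both proofs, since it also informs the strengthened version to be carried out in Theorem~\ref{extendedIssa}.

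The shared strategy is a truncation-and-error argument. First, replace each $\log|P_i|$ by the continuous truncation $\max\{\log|P_i|, -N\}$. The max and the product of these truncations are continuous on $\torus^n$, so Boyd's classical lemma \cite[Lemma~1]{boyd1981} immediately yields Boyd--Lawton convergence for the truncated integrals as $\mu(\mathbf{r}) \to \infty$. Second, estimate the error between the truncated and untruncated integrands. This requires uniform bounds, as $\mu(\mathbf{r})\to \infty$, on moments of the form $\int_\torus \bigl(\log^-|P_i^{(\mathbf{r})}|\bigr)^j \, d\tau$, obtained from Lawton-type quantitative refinements of Jensen's formula applied along the orbit $q_{\mathbf{r}}(\torus)$. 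Finally, sending $N \to \infty$ shows that the truncation error vanishes in both the max and product limits, completing the argument.

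The main obstacle in this program, and the place where \cite{Issa2013} and \cite{LalinSinha} do the substantive work, is controlling the error contribution from neighborhoods of the zero sets $\{P_i = 0\}$ uniformly in $\mathbf{r}$. For the max case, one only needs a first-moment bound on a single $\log^-|P_i^{(\mathbf{r})}|$, which is essentially Lawton's original estimate; this is the content of \cite[Theorem~4.1]{Issa2013}. The product case is more delicate---and is the genuinely new ingredient of \cite{LalinSinha}---because higher moments are needed: products of unbounded logs can amplify the same local singularity multiplicatively, so the error bound must absorb mixed terms of the form $\prod_i \bigl(\log^-|P_i^{(\mathbf{r})}|\bigr)^{j_i}$. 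Since I would quote these moment estimates directly rather than reprove them, no further analysis is needed at this point; the substantive extension---removing the restriction to ordinary polynomials and to positive coordinates $\mathbf{r} \in \bZ^n_{>0}$---is deferred to the strengthening in Section~\ref{extending_Issa}.
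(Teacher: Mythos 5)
Your proposal correctly identifies that this theorem is stated in the paper as a restatement of prior results, and the paper likewise gives no independent proof---it simply cites~\cite{Issa2013} and~\cite{LalinSinha}, exactly as you propose. Your supplementary sketch of the truncation-and-moment strategy is reasonable but goes beyond what the paper does; one small attribution point: the paper cites Theorem~1.2 of~\cite{Issa2013} as the source of \emph{both} the max and product limits, with~\cite[Theorem~30]{LalinSinha} mentioned only as an additional reference for the product case, so characterizing the higher-moment estimates for the product case as ``the genuinely new ingredient of \cite{LalinSinha}'' slightly misstates the provenance.
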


Our goal (Theorem~\ref{extendedIssa} below) is to extend the above result in two directions: 
\begin{enumerate}
    \item We want the result to apply to sequences of Laurent polynomials, not just sequences of ordinary polynomials.

    \item We want to drop the restriction that the lattice points $\mb r \in \bZ^n$ have positive coordinates.
\end{enumerate}

Obtaining goal (1) by itself is almost immediate based on the following observation:

\begin{lemma}\label{lem: PolyLaurSwap1}
    Let $P \in\CC\left[Z_1^{\pm1},\dots, Z_n^{\pm1}\right]$ be a nonzero Laurent polynomial, then there is a polynomial $Q \in \bC [Z_1, \ldots, Z_n]$
    such that
    $$
    | P(z_1, \dots, z_n) | = |Q (z_1, \dots, z_n)|
    $$
    for all $(z_1, \ldots, z_n) \in \torus^n$.
\end{lemma}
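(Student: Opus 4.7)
The plan is essentially a one-line construction: clear denominators by multiplying $P$ by a suitable monomial, and observe that monomials have modulus $1$ on $\torus^n$.

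More precisely, write $P = \sum_{\alpha \in S} c_\alpha Z_1^{\alpha_1} \cdots Z_n^{\alpha_n}$ where $S \subset \ZZ^n$ is the finite support of $P$. For each coordinate $i \in \{1, \ldots, n\}$, define
\[
k_i \defeq \max\bigl(0,\; -\min\{ \alpha_i : \alpha \in S\}\bigr),
\]
so that $k_i$ is a nonnegative integer chosen exactly large enough to ensure that $\alpha_i + k_i \ge 0$ for every $\alpha \in S$. Now set
\[
Q(Z_1, \ldots, Z_n) \defeq Z_1^{k_1} \cdots Z_n^{k_n} \cdot P(Z_1, \ldots, Z_n).
\]
By construction, every monomial appearing in $Q$ has nonnegative exponents, so $Q \in \bC[Z_1, \ldots, Z_n]$ is an ordinary polynomial. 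Note also that $Q$ is nonzero because multiplication by the nonzero monomial $Z_1^{k_1} \cdots Z_n^{k_n}$ is an injective operation on the Laurent polynomial ring.

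Finally, for any $(z_1, \ldots, z_n) \in \torus^n$ we have $|z_i| = 1$ for each $i$, so
\[
\bigl| z_1^{k_1} \cdots z_n^{k_n} \bigr| = 1,
\]
and therefore $|Q(z_1, \ldots, z_n)| = |z_1^{k_1} \cdots z_n^{k_n}| \cdot |P(z_1, \ldots, z_n)| = |P(z_1, \ldots, z_n)|$ as required. There is no real obstacle here; the statement is purely algebraic bookkeeping, and the only observation of substance is that $|z_i| = 1$ on the unit circle.
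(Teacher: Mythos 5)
Your proof is correct and takes essentially the same approach as the paper: multiply $P$ by a monomial $Z_1^{k_1}\cdots Z_n^{k_n}$ to clear negative exponents, and use that monomials have modulus $1$ on $\torus^n$. The only difference is that you spell out the choice of $k_i$ explicitly while the paper leaves it implicit.
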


\begin{proof}
    Write $P(Z_1,\dots, Z_n)=Z_1^{-k_1}\cdots Z_n^{-k_n} \cdot Q(Z_1,\dots, Z_n)$ where the~$k_i\in \ZZ_{\geq 0}$ and $Q$ is a polynomial in $\CC[Z_1,\dots, Z_n]$. For $(z_1,\dots, z_n)\in \torus^n$ we have
    $|z_1| = \ldots = |z_n| = 1$ so
    \begin{align*}
            \log\bigl|P(z_1,\dots, z_n)\bigr|      &=\log\big|z_1^{-k_1}\cdots z_n^{-k_n} \cdot Q (z_1,\dots, z_n)\big|\\
            &=\log\big|z_1^{-k_1}\cdots z_n^{-k_n}\big|+ \log\big|Q(z_1,\dots, z_n)\big|\\
            &=\sum_{j=1}^n-k_j\log|z_j| \ +\log\big|Q(z_1,\dots, z_n)\big|\\
            &=\log\big|Q(z_1,\dots, z_n)\big|. \qedhere
    \end{align*}
\end{proof}

From this we can extend \cite[Theorem~1.2]{Issa2013} to Laurent polynomials:

\begin{lemma}\label{lem: PolyLaurSwap2}
   Suppose $P_1, \ldots, P_k \in\CC\left[Z_1^{\pm1},\dots, Z_n^{\pm1}\right]$ are  nonzero Laurent polynomials where~$n \ge 2$.
If $g$ is either
    $$
    \max \{ \log|P_1|, \ldots, \log|P_k| \}
    \qquad\text{or}\qquad
    \ \log|P_1| \cdots \log|P_k| ,
    $$
    then
    $$
\int_{\torus^n}  g \, d\tau_n= \lim_{\mu(q_\mb r)\to \infty}\;  \int_\torus g \circ q_{\mb r} \, d \tau,
$$
 where in the limit  the $\mb r = (r_1, \ldots, r_n)$ vary only over lattice points in $\bZ^n_{>0}$.
\end{lemma}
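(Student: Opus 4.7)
The plan is to reduce this to Theorem~\ref{IssaThm} by replacing each Laurent polynomial $P_i$ with an ordinary polynomial $Q_i$ that agrees with $P_i$ in absolute value on the unit torus. For each $i$, Lemma~\ref{lem: PolyLaurSwap1} supplies a nonzero $Q_i \in \bC[Z_1, \ldots, Z_n]$ such that $|Q_i(z_1, \ldots, z_n)| = |P_i(z_1, \ldots, z_n)|$ for all $(z_1, \ldots, z_n) \in \torus^n$. Consequently, $\log|P_i| = \log|Q_i|$ as functions on $\torus^n$, and hence the function $g$ agrees pointwise on $\torus^n$ with the analogous function $\tilde g$ built from the $Q_i$ in place of the $P_i$ (either as a max of logs or as a product of logs).

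The first consequence is immediate: $\int_{\torus^n} g \, d\tau_n = \int_{\torus^n} \tilde g\, d\tau_n$. For the right-hand side of the claimed limit, I would observe that the image of any continuous homomorphism $q_{\mb r} \colon \torus \to \torus^n$ lies in $\torus^n$, so the identity $|P_i| = |Q_i|$ on $\torus^n$ yields $|P_i| \circ q_{\mb r} = |Q_i| \circ q_{\mb r}$ on $\torus$. Therefore $g \circ q_{\mb r} = \tilde g \circ q_{\mb r}$ almost everywhere on $\torus$, so $\int_\torus g \circ q_{\mb r} \, d\tau = \int_\torus \tilde g \circ q_{\mb r}\, d\tau$ for every $\mb r$.

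With both sides unaltered by the swap $P_i \mapsto Q_i$, the desired limit for $g$ becomes precisely the limit for $\tilde g$, which is a function built from nonzero ordinary polynomials $Q_1, \ldots, Q_k \in \bC[Z_1, \ldots, Z_n]$. Since the limit in the statement only ranges over $\mb r \in \bZ^n_{>0}$, this is exactly the hypothesis covered by Theorem~\ref{IssaThm}, and applying that result completes the proof.

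I do not anticipate a genuine obstacle here: the argument is essentially a transport of the known polynomial result across the identification $|P_i| = |Q_i|$ on the unit torus. The only minor point to keep in mind is that the $Q_i$ produced by Lemma~\ref{lem: PolyLaurSwap1} are automatically nonzero polynomials (they differ from $P_i$ only by multiplication by a Laurent monomial), so the hypotheses of Theorem~\ref{IssaThm} are met.
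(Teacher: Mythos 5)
Your proof is correct and follows essentially the same route as the paper: invoke Lemma~\ref{lem: PolyLaurSwap1} to replace each $P_i$ by an ordinary polynomial $Q_i$ with the same modulus on $\torus^n$, observe that this leaves $g$ unchanged as a function on $\torus^n$ (and hence all the integrals on both sides), and then apply Theorem~\ref{IssaThm}. The only difference is that you spell out why the right-hand integrals over $\torus$ are also unaffected, a point the paper leaves implicit.
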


\begin{proof}
We use Lemma~\ref{lem: PolyLaurSwap1} to obtain polynomials $Q_1, \ldots, Q_k  \in \bC[Z_1, \ldots, Z_n]$ such that 
    $$
    | P_i(z_1, \dots, z_n) | = |Q_i (z_1, \dots, z_n)|
    $$
    for all $(z_1, \ldots, z_n) \in \torus^n$.
In particular,  $g$ is either
    $$
    \max \{ \log|Q_1|, \ldots, \log|Q_k| \}
    \qquad\text{or}\qquad
    \ \log|Q_1| \cdots \log|Q_k| .
    $$
To finish the argument, we apply Theorem~\ref{IssaThm} (Theorem 1.2 of \cite{Issa2013}).
\end{proof}

The challenge now is to remove the positivity condition and allow $\mb r$ to be any $\mb r \in \bZ^n$ in the limit. 
\emph{A priori}, we do not assume all the components of $\mb r$ are nonzero, but we can assume~$\mu(\mb r)$ is large
which forces the components to be nonzero:

\begin{lemma}\label{one_lemma}
Suppose $\mb r \in \bZ^n$. If any component of $\mb r$ is $0$, then $\mu(\mb r) = 1$. In other words,~$\mu(\mb r) >1$
implies that the components of $\mb r$ are nonzero.
\end{lemma}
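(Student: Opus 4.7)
The plan is to give a single explicit annihilator witness and combine it with the trivial lower bound. Suppose some component $r_i$ of $\mb r$ is zero. I would take $\mb v = \mb e_i$, the $i$-th standard basis vector of $\bZ^n$. Then $\mb v \ne \mb 0$ and $\mb v \cdot \mb r = r_i = 0$, so $\mb v$ belongs to the set
$$
\{\mb v \in \bZ^n : \mb v \ne \mb 0, \ \mb v \cdot \mb r = 0 \}
$$
whose infimum (of $\ell^\infty$-norms) defines $\mu(\mb r)$. Since $\|\mb e_i\|_\infty = 1$, this immediately yields $\mu(\mb r) \le 1$.

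For the reverse inequality, I would observe that every nonzero $\mb v \in \bZ^n$ has at least one nonzero integer coordinate, hence $\|\mb v\|_\infty$ is a positive integer and in particular $\|\mb v\|_\infty \ge 1$. So once the defining set for $\mu(\mb r)$ is nonempty, its minimum is at least $1$. The existence of $\mb e_i$ above has already shown nonemptiness, so $\mu(\mb r) \ge 1$. Combining the two inequalities gives $\mu(\mb r) = 1$, which is the first sentence of the lemma; the second sentence is just the contrapositive.

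There is essentially no obstacle here: the argument is a one-line unpacking of the definition of Boyd height given in Section~\ref{height_section}. The only point requiring any care is to confirm that the minimum in the definition of $\mu$ is a positive integer whenever it is finite, which follows from integrality of the coordinates of $\mb v$.
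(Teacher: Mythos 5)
Your proof is correct and takes essentially the same route as the paper: exhibit the standard basis vector $\mb e_i$ as an explicit annihilator of norm $1$, forcing $\mu(\mb r) = 1$. You spell out the trivial lower bound $\mu(\mb r) \ge 1$ that the paper leaves implicit, but the argument is the same.
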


\begin{proof}
For example, suppose the first component of $\mb r$ is $0$. Then $\mb v = (1, 0, \ldots, 0)$ is orthogonal to $\mb r$, and $\| \mb  v\|_\infty = 1$.
This forces $\mu(\mb r)$ to be~$1$.
\end{proof}

Our idea is to replace $\mb r = (r_1, \ldots, r_n)$ whenever possible with $\mb r^+ = \left( |r_1|, \ldots, |r_n| \right)$, which does not change the Boyd height:

\begin{lemma} \label{same_mu}
Suppose $\mb r = (r_1, \ldots, r_n) \in \bZ^n$ and $\mb r^+ = \left( |r_1|, \ldots, |r_n| \right)$. Then 
$$
\mu \left( \mb r \right)
=
\mu \left( \mb r^+ \right).
$$
\end{lemma}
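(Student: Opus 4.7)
The plan is to produce an explicit sign-flipping bijection between the nonzero integer vectors orthogonal to $\mathbf{r}$ and those orthogonal to $\mathbf{r}^+$, show this bijection preserves the $\ell^\infty$-norm, and then read off the equality of Boyd heights directly from the definition
$$
\mu(\mathbf{r}) = \min\{\|\mathbf{v}\|_\infty : \mathbf{v} \in \bZ^n, \mathbf{v}\neq\mathbf{0}, \mathbf{v}\cdot\mathbf{r} = 0\}.
$$

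For each index $i$, let $\epsilon_i = 1$ if $r_i \geq 0$ and $\epsilon_i = -1$ if $r_i < 0$, so that $\epsilon_i r_i = |r_i|$ in every case. Define the map $\Phi \colon \bZ^n \to \bZ^n$ coordinate-wise by $\Phi(\mathbf{v})_i = \epsilon_i v_i$. Since each $\epsilon_i \in \{\pm 1\}$, the map $\Phi$ is a bijection (in fact an involution) that preserves $\|\cdot\|_\infty$ and sends nonzero vectors to nonzero vectors.

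The key computation is
$$
\Phi(\mathbf{v})\cdot \mathbf{r}^+ \;=\; \sum_{i=1}^n \epsilon_i v_i\, |r_i| \;=\; \sum_{i=1}^n v_i (\epsilon_i |r_i|) \;=\; \sum_{i=1}^n v_i r_i \;=\; \mathbf{v}\cdot \mathbf{r},
$$
using that $\epsilon_i |r_i| = r_i$ whether $r_i$ is positive, negative, or zero. Consequently $\Phi$ restricts to a norm-preserving bijection between $\{\mathbf{v}\in\bZ^n\setminus\{\mathbf{0}\} : \mathbf{v}\cdot\mathbf{r}=0\}$ and $\{\mathbf{w}\in\bZ^n\setminus\{\mathbf{0}\}: \mathbf{w}\cdot\mathbf{r}^+=0\}$; taking the minimum of $\|\cdot\|_\infty$ over either side therefore yields the same value, which is exactly the claim $\mu(\mathbf{r}) = \mu(\mathbf{r}^+)$.

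There is essentially no obstacle here; the only thing to check carefully is the edge case $r_i = 0$, which is handled uniformly by the convention $\epsilon_i = 1$ (or indeed any sign choice), since both $v_i r_i$ and $\Phi(\mathbf{v})_i |r_i|$ vanish regardless.
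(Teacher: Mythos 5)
Your proof is correct and follows essentially the same approach as the paper's: both define the sign-flipping involution that sends $v_i \mapsto \epsilon_i v_i$, observe it preserves the $\ell^\infty$-norm and carries vectors orthogonal to $\mathbf{r}$ bijectively onto vectors orthogonal to $\mathbf{r}^+$, and conclude by taking minima. Your write-up is somewhat more explicit about the key computation and the $r_i=0$ edge case, but the underlying idea is identical.
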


\begin{proof}
If $\mb v$ is orthogonal to $\mb r$, then we simply change the signs of the $i$ coordinate of $\mb v$ for every $i$ with $r_i < 0$. This results in a vector $\mb v'$ orthogonal to $\mb r^+$ with $\|\mb v\|_\infty= \|\mb v'\|_\infty$.
In fact, this process defines a norm preserving bijection between the set of nonzero vectors in~$\bZ^n$ orthogonal to $\mb r$ and the set of nonzero vectors in~$\bZ^n$ orthogonal to~$\mb r^+$.
The result now follows from the definition of the Boyd height $\mu$.
\end{proof}

Our key trick is to factor any $\mb r \in \bZ$ as $D \cdot \mb r^+$ where $D$ is a diagonal matrix with diagonal entries $\pm 1$ (and indeed $D$ describes the isomorphism of the above proof).
By Proposition~\ref{comp_prop}, this means that $g \circ q_{\mb r}$ is equal to $g \circ q_D \circ  q_{\mb r^+}$.
This suggests we use Lemma~\ref{lem: PolyLaurSwap2} on functions of the form $g \circ q_D$. The good news is that this does not change the integral for $g$:

%%%%%%%%%%%%%%%%%%%%%%%%%%%%%%%%%%%%%%%%%%%%%%%%%%%%%%%%%%%%%%%%%%%%%%%%%%%%%%%%%%%%%%%%%%%%%%

\begin{lemma}\label{invariance_lemma2}
Let $A$ be an invertible $n$ by $n$ matrix with integer coefficients and let $g$ be an integrable complex valued function on $\torus^n$.
Then
$$
 \int_{\torus^n} g\circ q_A \ d \tau_n =  \int_{\torus^n} g \ d \tau_n.
$$
\end{lemma}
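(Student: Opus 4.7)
The plan is to reduce the identity to the uniqueness of normalized Haar measure on the compact group $\torus^n$. Since $A$ is invertible as a rational matrix, it has rank $n$, so by Proposition~\ref{infinite_height_lemma} the continuous homomorphism $q_A\colon \torus^n \to \torus^n$ is surjective (and its kernel is finite of order $|\det A|$, though we will not need this fact).

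The core step is to consider the pushforward measure $\nu \defeq (q_A)_* \tau_n$ on $\torus^n$, defined by $\nu(E) = \tau_n\bigl(q_A^{-1}(E)\bigr)$ for Borel sets $E$. I would verify two properties of $\nu$: it is a probability measure, and it is translation-invariant. The first is immediate from $\tau_n(\torus^n) = 1$ together with the surjectivity of $q_A$. For the second, given any $w \in \torus^n$, pick $u \in \torus^n$ with $q_A(u) = w$, which is possible by surjectivity; then the fact that $q_A$ is a group homomorphism gives $q_A^{-1}(wE) = u \cdot q_A^{-1}(E)$, and translation-invariance of $\tau_n$ yields $\nu(wE) = \nu(E)$. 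By uniqueness of the normalized Haar measure on $\torus^n$, we conclude $\nu = \tau_n$.

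With this identification in hand, the standard change-of-variables formula for pushforward measures gives
\[
\int_{\torus^n} g \circ q_A \, d\tau_n \; = \; \int_{\torus^n} g \, d\nu \; = \; \int_{\torus^n} g \, d\tau_n,
\]
establishing the lemma. The only subtlety, and probably the main point to take care with, is ensuring that the change-of-variables identity is applied to an integrable function, which is valid because $g$ is assumed integrable with respect to $\tau_n = \nu$, and so $g \circ q_A$ is automatically integrable with respect to $\tau_n$.
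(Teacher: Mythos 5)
Your proof is correct, but it takes a genuinely different route from the paper. You reduce the statement to the uniqueness of normalized Haar measure: since $q_A$ is a surjective continuous homomorphism, the pushforward $(q_A)_*\tau_n$ is a translation-invariant Borel probability measure on $\torus^n$, hence must equal $\tau_n$, and the identity then drops out of the abstract change-of-variables formula for pushforward measures. The paper instead works concretely: it lifts everything to $\RR^n$ via the exponential map, writes the integral of the periodic function $\overline g$ over the set $\tilde A(I^n)$, applies the Euclidean change-of-variables formula with Jacobian $|\det A|$ on one side, and decomposes $\tilde A(I^n)$ into $|\det A|$ fundamental domains for $\ZZ^n$ on the other side, so that the factor $|\det A|$ cancels. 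Your argument is shorter and more structural, and generalizes verbatim to continuous surjective endomorphisms of any compact group; the paper's argument is more elementary and self-contained, requiring only multivariable calculus and the measurability bookkeeping that is already in place, and it also makes the intermediate quantity $|\det A|$ visible, which is conceptually pleasant. One small caveat worth noting explicitly if you wanted to make your version airtight: uniqueness of Haar measure is usually stated for Radon (regular Borel) measures, so one should observe that on the compact metrizable space $\torus^n$ every finite Borel measure is automatically Radon; this is immediate but deserves a word.
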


\begin{proof}
Recall the commutative diagram (using notation established above; for example $\tilde A$ is the linear transformation associated with $A$):
 $$
 \begin{tikzcd} [column sep = large]
\bR^n  \arrow[r, "\tilde A"]  \arrow[d] & \bR^n \arrow[d]    \\
\bR^n/\bZ^n  \arrow[r, "\bar A"]  \arrow[d, "\mathrm{exp}"] & \bR^n/\bZ^n  \arrow[d, "\mathrm{exp}"]   \\
\torus^n  \arrow[r,  "q_A"]  & \torus^n    
\end{tikzcd}
$$
The degree of 
the induced maps $\overline A$ and $q_A$ are $d = | \det A|$; in other words, these maps are~$d$-to-$1$ (covering) maps.
In particular, if $I = [0, 1]$, then the image $\tilde A (I^n)$ consists of~$d = | \det A|$ copies of fundamental domain for $\bZ^n$.\footnote{In what follows we only actually need the case when $A$ is a diagonal matrix. In this case the decomposition of $\tilde A (I^n)$ is easy to picture: it is  an  $n$-box with edge length given by the absolute values of the diagonal
entries of~$A$, and the total measure of $\tilde A (I^n)$ is equal to $|\det A|$. In this case, $\tilde A (I^n)$ decomposes into $|\det A|$ distinct~$\bZ^n$ translations of $I^n$, and these translations
intersect in measure zero.}

We use the above diagram to work with integrals on~$I^n$ (and translations). To do so we need to integrate
the periodic function
$$
\overline g(z_1, \ldots, z_n) \; \defeq \; g\big( \exp (2 \pi z_1 i), \ldots, \exp (2 \pi z_n i) \big)
$$
on subsets of $\bR^n$ such as $I^n$.
Note that $g$ is measurable, so $\overline g$ is measurable, and $g$ is integrable, so $\overline g$ is integrable on $I^n$. In fact, it follows that $\overline g$ is integrable on
any bounded subset of $\bR^n$, including $\tilde A(I^n)$. This fact allows us to use the change of variables formula (See Theorem~2.9, Chapter~VIII, of~\cite[page 191]{lang_dman}).
We make use of the substitution $\mb u = \tilde A (\mb x)$ with Jacobian given by~$|\det A|$. 
We have
\begin{eqnarray*}
\int_{\tilde A (I^n)} \overline g\left ( \mb u\right)\ d \mb u
&=&
\int_{I^n} \overline g\left (\tilde A (\mb z)\right)\ |\det A|  \, d \mb z \\
&=&
|\det A| \int_{\torus^n} g \circ q_A \ d\tau_n. \\
\end{eqnarray*}

The $\bZ^n$-periodicity of $\overline g$ together with a decomposition of $\tilde A(I^n)$ into $d = |\det A|$ fundamental domains $F_1, \ldots, F_d$ of $\bZ^n$ implies that
\begin{eqnarray*}
\int_{\tilde A (I^n)} \overline g\left ( \mb u\right)\ d \mb u
&=&
\sum_{i=1}^{d} \int_{F_i} \overline g (\mb u) \ d\mb u\\
&=&
\sum_{i=1}^{|\det A|} \int_{\torus^n} g \ d\tau_n\\
&=&
|\det A| \int_{\torus^n} g  \ d\tau_n. \\
\end{eqnarray*}
Combining the two equations for $\int_{\tilde A (I^n)} \overline g\left ( \mb u\right)\ d \mb u$ yields
\[\int_{\torus^n} g \circ q_A \ d\tau_n  =  \int_{\torus^n} g  \ d\tau_n. \qedhere \]
\end{proof}

Now we have all the tools necessary to extend Theorem~\ref{IssaThm}  (Theorem 1.2 of \cite{Issa2013}) to our more general setting (see Corollary~\ref{extendedIssa2} for an even more general extension).

\begin{theorem} \label{extendedIssa}
Suppose $P_1, \ldots, P_k \in\CC\left[Z_1^{\pm1},\dots, Z_n^{\pm1}\right]$ are  nonzero Laurent polynomials with $n\ge 2$.
If $g$ is either
    $$
    \max \{ \log|P_1|, \ldots, \log|P_k| \}
    \qquad\text{or}\qquad
    \ \log|P_1| \cdots \log|P_k| ,
    $$
    then
    $$
\int_{\torus^n}  g \, d\tau_n= \lim_{\mu(q_\mb r)\to \infty}\;  \int_\torus g \circ q_{\mb r} \, d \tau,
$$
 where in the limit the $\mb r = (r_1, \ldots, r_n)$ vary over all lattice points in $\bZ^n$. 
 In other words,
    \[\lim_{\mu\left( {\mb r} \right)  \to \infty} m_{n, \max}\left( P_1^{(\mb r)}, \ldots,  P_k^{(\mb r)}\right)
    =
    m_{n, \max}( P_1, \ldots,  P_k)
    \]
    and
    \[
    \lim_{\mu\left( {\mb r} \right)  \to \infty} m_{n, \operatorname{prod}}\left( P_1^{(\mb r)}, \ldots,  P_k^{(\mb r)}\right)
    =m_{n, \operatorname{prod}}(P_1, \ldots, P_k),
    \]
    where in the limit the $\mb r = (r_1, \ldots, r_n)$ vary over all lattice points in $\bZ^n$.

\end{theorem}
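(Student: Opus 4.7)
The plan is to reduce the general lattice case to the positive case already established in Lemma~\ref{lem: PolyLaurSwap2}, using sign matrices. First I would restrict attention to $\mb r \in \bZ^n$ with $\mu(\mb r) > 1$, so that Lemma~\ref{one_lemma} guarantees every component of $\mb r$ is nonzero. Then $\mb r$ factors as $\mb r = D\,\mb r^+$, where $D$ is the diagonal integer matrix with diagonal entries $\mathrm{sgn}(r_i)\in\{\pm 1\}$ and $\mb r^+ = (|r_1|,\ldots,|r_n|)\in \bZ_{>0}^n$. By Proposition~\ref{comp_prop} this gives $q_{\mb r} = q_D \circ q_{\mb r^+}$, and by Lemma~\ref{same_mu} we have $\mu(\mb r^+) = \mu(\mb r)$.

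Next I would observe that $g\circ q_D$ has the same algebraic form as $g$. Since $q_D(z_1,\ldots,z_n) = (z_1^{\pm 1},\ldots,z_n^{\pm 1})$,
\[
g\circ q_D \;=\; \max\!\left\{\log\big|P_1^{(D)}\big|,\ldots,\log\big|P_k^{(D)}\big|\right\}
\quad\text{or}\quad
\log\big|P_1^{(D)}\big|\cdots\log\big|P_k^{(D)}\big|,
\]
depending on the form of $g$. Because $D$ is invertible with $\det D = \pm 1$, each $P_i^{(D)}$ is again a nonzero element of $\bC[Z_1^{\pm 1},\ldots,Z_n^{\pm 1}]$, so Lemma~\ref{lem: PolyLaurSwap2} applies to $g\circ q_D$. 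Moreover, Lemma~\ref{invariance_lemma2} applied to the invertible integer matrix $D$ yields $\int_{\torus^n} g\circ q_D\, d\tau_n = \int_{\torus^n} g\, d\tau_n$, so the target integral in the positive Boyd--Lawton limit for $g\circ q_D$ agrees with the target integral we actually want.

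To assemble the conclusion, I would use that there are only $2^n$ possible sign matrices. Enumerate them as $D_1,\ldots,D_{2^n}$. Given $\epsilon > 0$, Lemma~\ref{lem: PolyLaurSwap2} applied to each $g\circ q_{D_j}$, combined with Lemma~\ref{invariance_lemma2}, yields thresholds $N_j$ such that for every $\mb s\in\bZ_{>0}^n$ with $\mu(\mb s) > N_j$,
\[
\left|\int_\torus (g\circ q_{D_j})\circ q_{\mb s}\, d\tau \;-\; \int_{\torus^n} g\, d\tau_n\right| < \epsilon.
\]
Setting $N = \max(N_1,\ldots,N_{2^n},1)$, any $\mb r$ with $\mu(\mb r) > N$ admits the factorization $\mb r = D_{\mb r}\,\mb r^+$ with $\mb r^+ \in \bZ_{>0}^n$ and $\mu(\mb r^+) = \mu(\mb r) > N_{D_{\mb r}}$, so Proposition~\ref{comp_prop} gives $g\circ q_{\mb r} = (g\circ q_{D_{\mb r}})\circ q_{\mb r^+}$ and the displayed inequality applies, establishing the claimed limit.

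The only real obstacle is the bookkeeping required to produce a single threshold $N$ that works uniformly across all $2^n$ sign patterns; this is handled by taking a maximum, which works precisely because the function class is stable under the finitely many substitutions $q_{D_j}$ and because Lemma~\ref{invariance_lemma2} makes all $2^n$ target integrals coincide with the one we want. No additional analytic input beyond Lemma~\ref{lem: PolyLaurSwap2} and Lemma~\ref{invariance_lemma2} is needed.
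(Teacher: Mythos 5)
Your proposal is correct and follows essentially the same route as the paper's own proof: factor $\mb r = D\,\mb r^+$ with $D$ a $\pm 1$-diagonal sign matrix, use Lemma~\ref{same_mu} and Lemma~\ref{one_lemma} to pass to the positive vector $\mb r^+$, apply Lemma~\ref{lem: PolyLaurSwap2} to each $g\circ q_D$, use Lemma~\ref{invariance_lemma2} to identify the target integrals, and take the maximum over the $2^n$ thresholds. The one helpful remark you added beyond the paper's presentation is explicitly noting that $P_i^{(D)}$ remains a nonzero Laurent polynomial because $D$ is invertible, which the paper leaves implicit.
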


\begin{proof}
    As noted above, our trick is to  factor each $\bfr\in \ZZ^n$ as $D\cdot \bfr^+$ where $D$ is an $n\times n$ diagonal matrix with diagonal entries $\pm 1$ and where $\bfr^+\in\ZZ^n_{\ge 0}$. 
    Let $\mathcal{D}(\pm)$ denote the set of all $2^n$ diagonal~$n\times n$ matrices with diagonal entries $\pm 1$.
    
    Write~$g\mathcal{D}(\pm)$ for the set~$\{g\circ q_D: D\in \mathcal{D}(\pm)\}$.
    By Lemma~\ref{invariance_lemma2} any two elements of $g\mathcal{D}(\pm)$ have the same integral over $\torus^n$.

    Our goal is to show
    $$\lim_{\mu \left( q_{\mb r} \right)  \to \infty} \int_{\torus}  g \circ q_\bfr \, d\mu = \int_{\torus^n} g \, d\mu_n$$ 
    where $\bfr$ varies in $\ZZ^n$. 
To do so, fix $\epsilon>0$.  
We apply Lemma \ref{lem: PolyLaurSwap2} to  $g \circ q_D$ for each of the $2^n$ matrices $D \in \mathcal{D}(\pm)$. 
We get bounds $N_D$ such that whenever $\bfr^+\in\ZZ^n_{>0}$ with $\mu\left(q_{\bfr^+} \right)\geq N_D$, we have
    \[
    \Bigg|
    \int_{\torus} (g\circ q_D) \circ q_{\bfr^+} \ d\mu \, - \int_{\torus^n} g\circ q_D \ d \mu_n \Bigg|<\epsilon.
    \]
   By taking $N$ to be the maximum of the $N_D$, we obtain a uniform bound that works over $g\mathcal{D}(\pm)$.
   Of course we can assume $N > 1$.

    Now, let $\bfr\in\ZZ^n$ with $\mu\left( q_{\mb r} \right) \geq N$. Write $\bfr=D\cdot \bfr^+$ for some $D\in \mathcal{D}(\pm)$. 
    Since $N > 1$, all the components of $\mb r$ are nonzero (Lemma~\ref{one_lemma}), and so $\mb r^+ \in \bZ^n_{>0}$.
Note that $q_{\mb r} = q_D \circ q_{\mb r^+}$ (Proposition~\ref{comp_prop}), so 
     \[
     \int_{\torus} g\circ q_\bfr \ d\mu= \int_{\torus} g\circ q_D \circ q_{\bfr^+} \ d\mu.
     \]
Also $\mu\left( q_{\mb r}^+ \right) = \mu\left( q_{\mb r} \right) \ge N$ (Lemma~\ref{same_mu}), so
    \[
    \left|
    \int_{\torus} g\circ q_D \circ q_{\bfr^+} \ d\mu \, - \int_{\torus^n} g\circ q_D \ d \mu_n 
    \right|<\epsilon.
    \]
    As above $\int_{\torus^n} g\circ q_D \ d\mu_n=\int_{\torus^n} g \  d\mu_n$ (Lemma~\ref{invariance_lemma2}), so
    \[ \left|
           \int_{\torus} g\circ q_\bfr  \ d\mu  \, -   \int_{\torus^n} g  \ d \mu_n 
           \right|<\epsilon. \qedhere
     \]
\end{proof}

%%%%%
%%%%%
%%%%%

\section{Proof of Main Result}

We are ready to prove Theorem~\ref{main_theorem}. First we restate this theorem in a form that aligns with our
proof, and at the same time spells out the limit condition of Theorem~\ref{main_theorem}:

\begin{theorem} \label{main_theorem_alt}
Let $\mc C$ be a Boyd--Lawton collection and let $g \in \mc C(\torus^n)$.
Then for any~$\varepsilon > 0$, there is a real bound $M$ such that if $m$ is a positive integer
and if $q_A \colon \torus^m \to \torus^n$ is a continuous homomorphism with $\mu\left(q_A\right) \ge M$,
then 
$$
\left|   \int_{\torus^{n}} g \, d \tau_{n}
 \; - \; 
\int_{\torus^m} g \circ q_A \, d \tau_m \,
\right|
< \varepsilon.
$$
\end{theorem}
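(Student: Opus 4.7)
The plan is to invoke the single-variable Boyd--Lawton property (part~(2) of Definition~\ref{BL collection def}) twice: once applied to $g$ itself, and once applied to $g \circ q_A$, which lives in $\mc C(\torus^m)$ by part~(3) as long as $\mu(q_A)$ is sufficiently large. The bridge linking the two applications will be the composition law $q_A \circ q_{\mb r} = q_{A \mb r}$ from Proposition~\ref{comp_prop}, which turns a one-variable homomorphism $\torus \to \torus^m$, after post-composition with $q_A$, back into a one-variable homomorphism $\torus \to \torus^n$.

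\textbf{Execution.} Fix $\varepsilon > 0$ and assume first $n \ge 2$. I would apply part~(2) to $g$ to produce a threshold $M_0$ such that any $q_{\mb s} \colon \torus \to \torus^n$ with $\mu(q_{\mb s}) \ge M_0$ satisfies $\bigl|\int_{\torus^n} g\,d\tau_n - \int_\torus g \circ q_{\mb s}\, d\tau\bigr| < \varepsilon/2$. Next, invoke part~(3) to produce a threshold $B$ such that $g \circ q_A \in \mc C(\torus^m)$ whenever $\mu(q_A) \ge B$; set $M := \max(M_0, B)$. Given any $q_A \colon \torus^m \to \torus^n$ with $\mu(q_A) \ge M$, the case $m = 1$ is handled immediately, because $q_A$ itself plays the role of $q_{\mb s}$. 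For $m \ge 2$, apply part~(2) to $g \circ q_A$ to obtain an $A$-dependent bound $N$ such that $\mu(q_{\mb r}) \ge N$ forces $\bigl|\int_{\torus^m} g \circ q_A\,d\tau_m - \int_\torus g \circ q_A \circ q_{\mb r}\, d\tau\bigr| < \varepsilon/2$. I would then pick any $\mb r \in \bZ^m$ with $\mu(q_{\mb r}) \ge \max(N,\, n\|A\|_\infty M_0)$, which is possible by Proposition~\ref{unbounded_lemma}. Proposition~\ref{TP_prop} (with $b = M_0$) then guarantees $\mu(q_A \circ q_{\mb r}) = \mu(q_{A\mb r}) \ge M_0$, so both $\varepsilon/2$ estimates apply to this $\mb r$, and the triangle inequality closes the gap between $\int_{\torus^n} g\,d\tau_n$ and $\int_{\torus^m} g \circ q_A\,d\tau_m$. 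The degenerate case $n = 1$ must be treated separately, since part~(2) of Definition~\ref{BL collection def} is only assumed for $n \ge 2$: once $M > 1$, we have $\mu(q_A) = \infty$, so $q_A$ is surjective by Proposition~\ref{infinite_height_lemma}, and uniqueness of Haar measure on $\torus$ (as the pushforward of $\tau_m$) forces $\int_{\torus^m} g \circ q_A \, d\tau_m = \int_\torus g\, d\tau$ exactly.

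\textbf{Main obstacle.} The heart of the argument is arranging that $\mu(q_{A\mb r}) \ge M_0$ holds simultaneously with $\mu(q_{\mb r}) \ge N$; this is precisely what Proposition~\ref{TP_prop} supplies. The only subtlety is that the required size of $\mu(q_{\mb r})$ depends on $\|A\|_\infty$, so $\mb r$ must be chosen \emph{after} $A$ is fixed. This order of quantifiers causes no harm because $N$ already depends on $A$ as well; the matrix $A$ is held fixed throughout the inner construction and only $\mb r$ is free. Once Proposition~\ref{TP_prop} is in hand, the remainder is routine bookkeeping with the triangle inequality.
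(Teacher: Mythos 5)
Your proof for $n \ge 2$ is correct and follows essentially the same route as the paper: apply the single-variable Boyd--Lawton hypothesis twice (once to $g$, once to $g \circ q_A$ after part~(3) puts it in $\mc C(\torus^m)$), bridge the two via $q_A \circ q_{\mb r}$ using Proposition~\ref{TP_prop}, and close with the triangle inequality. The one small difference is that you take $b = M_0$ in Proposition~\ref{TP_prop} and thus require $\mu(q_{\mb r}) \ge n\|A\|_\infty M_0$, whereas the paper takes $b = M = \max(M_1, M_2)$; since $\mu(q_A) \ge M \ge M_0$, your choice also satisfies the hypotheses of Proposition~\ref{TP_prop}, so this is a harmless (slight) sharpening. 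You also explicitly note that the $m = 1$ case of the conclusion collapses to a direct application of part~(2), which the paper leaves implicit.

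Your $n = 1$ argument is genuinely different from the paper's, and it is correct. The paper lifts $g$ to a function $\tilde g$ on $\torus^2$ by ignoring the second coordinate, checks $\tilde g \in \mc C(\torus^2)$ via part~(3) (using that the projection $q_{[1,0]}$ has infinite Boyd height), enlarges $q_A$ to $q_{\tilde A} \colon \torus^{m+1} \to \torus^2$, and then quotes the already-proved $n \ge 2$ case plus Fubini--Tonelli. You instead observe that $\mu(q_A) \ge 2$ forces $\mu(q_A) = \infty$, hence $q_A$ is surjective by Proposition~\ref{infinite_height_lemma}, and then invoke the standard fact that the pushforward of normalized Haar measure under a surjective continuous homomorphism of compact groups is again the normalized Haar measure, giving
\[
\int_{\torus^m} g \circ q_A \, d\tau_m = \int_\torus g \, d\bigl( (q_A)_* \tau_m \bigr) = \int_\torus g \, d\tau
\]
exactly (and this equality also shows $g \circ q_A$ is integrable, which you should say explicitly). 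Your route is shorter and arguably more conceptual; its one cost is that it imports a fact about pushforwards of Haar measure that the paper does not otherwise need, while the paper's embedding trick stays entirely inside the machinery it has already built (Boyd--Lawton collections, Fubini, and the $n \ge 2$ case). Either is a complete proof.
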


We start by proving the case where $n>1$. We will then use this to prove a stronger version when $n=1$.

\begin{proof}[Proof for $n>1$.]
We begin with the main case where $n>1$, and then use the result in the main case to justify the~$n=1$ case afterwards.
The following diagram illustrates the proof strategy for the main case:
$$
\begin{tikzcd}
\int_{\torus^n} g \; d \tau_n
  \arrow[rr, dash, dashed, "\varepsilon"]  
    \arrow[rd, dash, swap, "\varepsilon/2"] 
&&
\int_{\torus^m} g \circ q_A  \; d \tau_m
 \\
 &
\int_{\torus} g \circ q_A \circ q_{\mb r} \; d \tau
\arrow[ru, dash, swap, "\varepsilon/2"]
 \end{tikzcd}
$$

Fix $g$ and $\varepsilon > 0$ as in the statement of the theorem where $n>1$.
Since $g$ is in a Boyd--Lawton collection we can appeal to the 
single-variable Boyd--Lawton property  for $g$ to obtain a bound~$M_1$ such 
that~$g \circ q_{\mb t}$ is integrable and
\[\left|   \int_{\torus^{n}} g \, d \tau_{n}
 \; - \; 
\int_{\torus} g\circ q_{\mb t} \, d \tau \,
\right|
< \varepsilon/2\]
for all continuous homomorphisms $q_{\mb t} \colon \torus\to \torus^n$ with $\mu\left( q_{\mb t} \right) \ge M_1$.
Again, since $\mc C$ is a Boyd--Lawton collection, we can 
find a bound $M_2$ so that 
$g \circ q_A$ is
integrable and satisfies a single-variable Boyd--Lawton condition whenever~$\mu\left( q_{A} \right) \ge M_2$.

We claim that if $M$ is the maximum of $M_1$ and $M_2$, then this choice of $M$ satisfies the condition of the theorem. 
To justify the claim, fix a continuous homomorphism~$q_A \colon \torus^m \to \torus^n$ such that $\mu\left( q_{A} \right) \ge M$.
Our goal is now to verify
$$
\left|   \int_{\torus^{n}} g \, d \tau_{n}
 \; - \; 
\int_{\torus^m} g \circ q_A \, d \tau_m \,
\right|
< \varepsilon.
$$
for this fixed (but arbitrary) $q_A$.

Since $\mu\left( q_{A} \right) \ge M \ge M_2$, we know that $g \circ q_A$ 
is integrable on $\torus^m$ and that $g \circ q_A$ itself satisfies a Boyd--Lawton condition.
Thus there is a bound $M'$ such that for every~$q_\mb r  \colon \torus \to \torus^m$ satisfying~$\mu\left( q_\mb r \right) \ge M'$, the composition $(g \circ q_A) \circ q_{\mb r} = g \circ q_A \circ q_{\mb r}$ is integrable on $\torus$ and
\[\left|   \int_{\torus^{m}}  g \circ q_A  \, d \tau_{m}
 \; - \; 
\int_{\torus} g \circ q_A \circ q_{\mb r} \, d \tau \,
\right|
< \varepsilon/2.\]
Fix such a bound $M'$, choosing it larger than  $n \| A \|_\infty M$.

By Proposition~\ref{unbounded_lemma}, we may fix a continuous homomorphism $q_\mb r \colon T\to T^m$ such that~$\mu\left( q_\mb r \right) \ge M'$. 
We define $q_{\mb t} = q_A \circ q_{\mb r}$ to be the composition of our two fixed homomorphisms.
By Proposition~\ref{TP_prop}, we have $\mu\left( q_{\mb t}\right) \ge M \ge M_1$. So
\[
\left|   \int_{\torus^{n}}  g   \, d \tau_{n}
 \; - \; 
\int_{\torus} g \circ q_A \circ q_{\mb r} \, d \tau \,
\right|
=
\left|   \int_{\torus^{n}} g \, d \tau_{n} - \int_{\torus} g \circ q_{\mb t} \, d \tau \,\right|
< \varepsilon/2.
\]

The triangle inequality now yields 
\[\left|  \int_{\torus^{n}}  g \, d \tau_{n} 
 \; - \; 
\int_{\torus^{m}} g \circ q_A \, d \tau_{m}
\right|
< \varepsilon\]
as desired.
This completes the proof in the case where $n > 1$.
\end{proof}

%%%%%%%%%%%%%%%%%%%%%

\begin{proof}[Proof for $n=1$.]
In this case we will prove something stronger: we will show that if~$g\in \mc C(\torus^1)$ and if 
$q_A \colon \torus^m \to \torus^1$ is a continuous homomorphism with $\mu\left(q_A\right) \ge M = 2$,
then
$$
 \int_{\torus^{1}} g \, d \tau
 \;  = \; 
\int_{\torus^{m}} g \circ q_{A} \ d \tau_{m}.
$$
Trivially the two integrals will be distance less than any $\varepsilon>0$.  Note: this equality of integrals is just a special case of Corollary~\ref{cor_mt} below.

Suppose $q_A \colon \torus^m \to \torus^1$ is a continuous homomorphism with~$\mu(A) = \mu\left(q_A\right) \ge 2$. Since~$\mu(A) \ne 1$, the row matrix $A$ is not the zero matrix, and so has rank $1=n$.
In particular,~$q_A \colon \torus^m \to \torus$ is surjective (Proposition~\ref{infinite_height_lemma}).

The main trick of this proof (for $n=1$) is to regard $g$ not as a function on $\torus^1$ as originally given, but as a function on~$\torus^2$ where $g$  simply ignores the second coordinate;
this sets us up to use the $n = 2$ case of the theorem which has already been established.
To make this precise, let $\tilde g$ be a function (almost everywhere) on $\torus^2= \torus \times \torus$
defined by the rule~$\tilde g(z_1, z_2) = g(z_1)$. 
In particular, using the Fubini-Tonelli theorem,
\begin{eqnarray*}
\int_{\torus^2} \tilde g \ d\tau_2 
&=& \int_{\torus^2} g (z_1) \ d\tau_2(z_1, z_2) \\
&=& \int_{\torus} \left( \int_{\torus} g(z_1) \ d\tau(z_2) \right) d\tau  (z_1) \\
&=& \int_{\torus}  g(z_1) \left( \int_{\torus}  \ d\tau \right) d\tau  (z_1)   = \int_{\torus} g \ d\tau.
\end{eqnarray*}
We also claim that $\tilde g$ is in $\mc C(\torus^2)$, where $\mc C$ is the  Boyd--Lawton collection given in the statement of the theorem.
To see this, note that  $\tilde g = g\circ q_{\mb t}$ on $\torus^2$ where $\mb t = [1, 0]$ and $q_{\mb t} \colon \torus^2 \to \torus$ is the projection map to the first coordinate. Since $q_{\mb t}$ is surjective, we have 
that $\mu \left(q_{\mb t}\right) = \infty$ (Proposition~\ref{infinite_height_lemma}), which forces $\tilde g= g\circ q_{\mb t}$ to be in $\mc C(\torus^2)$ by the definition of Boyd--Lawton collection (Definition~\ref{BL collection def}, part 3).

Next, we expand $q_A$ to a map $\torus^{m+1} \to \torus^2$ by having it map as the identity on the last coordinate. More precisely,
consider the homomorphism $q_{\tilde A} \colon \torus^m \times \torus \to \torus \times \torus$ given by
$$
q_{\tilde A} (\mb z, w) = \left( q_A(\mb z), w \right).
$$
Note that $q_{\tilde A}$ is continuous and,
since $q_A$ is surjective, $q_{\tilde A}$ must also be surjective.  This implies that~$\mu\left( q_{\tilde A} \right) = \infty$ (Proposition~\ref{infinite_height_lemma}).
Using the  Fubini-Tonelli theorem as before we have
\begin{eqnarray*}
\int_{\torus^{m+1}} \tilde g \circ q_{\tilde A} \ d\tau_{m+1} 
&=& \int_{\torus^m} \left(  \int_{\torus} \tilde g \circ q_{\tilde A} (\mb z, w) \ d\tau (w) \right) \ d\tau_{m} (\mb z) \\
&=& \int_{\torus^m} \left(  \int_{\torus} \tilde g  \left(q_A(\mb z), w\right) \ d\tau (w) \right) \ d\tau_{m} (\mb z) \\
&=& \int_{\torus^m} \left(  \int_{\torus} g  \left(q_A(\mb z)\right) \ d\tau (w) \right) \ d\tau_{m} (\mb z) \\
&=& \int_{\torus^m} g  \left(q_A(\mb z)\right)  \left( \int_{\torus}  \ d\tau \right) d\tau_m  (\mb z)   = \int_{\torus^m} g \circ q_A \ d\tau_m.
\end{eqnarray*}

Finally we note that we have established the theorem for $n>1$, so it can be applied to the function $\tilde g \in \mc C(\torus^2)$.
In fact, since   $\mu\left( q_{\tilde A} \right) = \infty$, we get the stronger result that
$$
 \int_{\torus^{2}} \tilde g \, d \tau_{n}
 \;  = \; 
\int_{\torus^{m+1}} \tilde g \circ q_{\tilde A} \ d \tau_{m+1},
$$
as highlighted in the corollary below.
By substitution, we have
\[
 \int_{\torus^{1}} g \, d \tau
 \;  = \; 
\int_{\torus^{m}} g \circ q_{A} \ d \tau_{m}. \qedhere\]
\end{proof} 

If $A$ has infinite Boyd height, then the above gives an equality:

\begin{corollary}  \label{cor_mt}
Let $\mc C$ be a Boyd--Lawton collection and let $g \in \mc C_n$.
Suppose $q_A \colon \torus^m \to \torus^n$ is a nonzero continuous homomorphism of infinite Boyd height. In other words, suppose 
that~$q_A \colon \torus^m \to \torus^n$ is surjective.
Then
$$
 \int_{\torus^{n}} g \, d \tau_{n}
 \;  = \; 
\int_{\torus^m} g \circ q_A \, d \tau_m \,.
$$
\end{corollary}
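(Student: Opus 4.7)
The plan is to derive this corollary directly from Theorem~\ref{main_theorem_alt} by exploiting the fact that a homomorphism of infinite Boyd height automatically satisfies the hypothesis $\mu(q_A) \ge M$ for every bound $M$. First I would note that the two formulations of the hypothesis (``nonzero continuous homomorphism of infinite Boyd height'' and ``$q_A$ is surjective'') coincide by Proposition~\ref{infinite_height_lemma}: for $A \in M_{n,m}(\bZ)$ one has $\mu(q_A) = \infty$ iff $A$ has rank $n$ iff $q_A$ is surjective. Thus the two stated forms of the hypothesis are interchangeable and the surjective case is the one to treat.

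Fix $\varepsilon > 0$ arbitrarily and apply Theorem~\ref{main_theorem_alt} to the function $g \in \mc C(\torus^n)$. This furnishes a real bound $M$ such that every continuous homomorphism $q_{A'} \colon \torus^{m'} \to \torus^n$ satisfying $\mu(q_{A'}) \ge M$ obeys
\[
\left| \int_{\torus^n} g \, d\tau_n \; - \; \int_{\torus^{m'}} g \circ q_{A'} \, d\tau_{m'} \right| \; < \; \varepsilon.
\]
Since our fixed $q_A$ has $\mu(q_A) = \infty \ge M$, we may specialize $q_{A'}$ to be $q_A$ itself (so $m' = m$), obtaining
\[
\left| \int_{\torus^n} g \, d\tau_n \; - \; \int_{\torus^{m}} g \circ q_{A} \, d\tau_{m} \right| \; < \; \varepsilon.
\]
Because this bound holds for every $\varepsilon > 0$, the two integrals must agree exactly.

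No substantive obstacle arises: the corollary is essentially the observation that, under the main theorem, the limiting value is in fact attained (not merely approached) whenever the Boyd height is actually infinite. Implicit in the argument is that $g \circ q_A$ is genuinely integrable on $\torus^m$, but this is guaranteed by parts~(1) and~(3) of Definition~\ref{BL collection def}: since $\mu(q_A) = \infty$ exceeds the bound $B$ appearing in part~(3), the composition $g \circ q_A$ lies in $\mc C(\torus^m)$ and hence is integrable.
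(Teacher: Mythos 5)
Your proof is correct and matches the paper's intended argument: the paper states Corollary~\ref{cor_mt} immediately after Theorem~\ref{main_theorem_alt} with the remark ``If $A$ has infinite Boyd height, then the above gives an equality,'' which is precisely your observation that $\mu(q_A) = \infty$ clears the bound $M$ for every $\varepsilon$, forcing equality. Your additional note that Proposition~\ref{infinite_height_lemma} identifies the two formulations of the hypothesis, and that integrability of $g \circ q_A$ follows from parts~(1) and~(3) of Definition~\ref{BL collection def}, fills in details the paper leaves implicit.
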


\begin{remark}
Compare the above with Lemma~\ref{invariance_lemma2}. From this lemma, we see that the corollary holds for more general $g$, at least if $A$ is invertible.
\end{remark}

%%%%%
%%%%%
%%%%%

\section{Consequences}

Theorem~\ref{main_theorem} applies to any function $g$ in any Boyd--Lawton collection, including those
in the collections~$\mc C_1, \mc C_2, \mc C_3,$ and $\mc C_4$ described in Section~\ref{section_BL_Collections}.
If we combine Theorem~\ref{main_theorem} with Propositions~\ref{prop: C_1}, \ref{prop: C_2}, and \ref{prop: C_3}, 
%of Section~\ref{section_BL_Collections}
then we obtain the following corollaries:

\begin{corollary}
If $g\colon\torus^n\to\bC$ is continuous,
then
$$
\int_{\torus^{n}} g \, d \tau_{n} \; = \;  \lim_{\mu\left( q_A \right)\to \infty}\;  \int_{\torus^m} g \circ q_A \; d \tau_m,
$$
where $q_A\colon \torus^m \to\torus^n$ varies among continuous homomorphisms. Here $m$ may vary in the limit, but $n$ is fixed by the choice of~$g$. % (where $m$ itself varies among positive integers but where $n$ is fixed by choice of $g$).
\end{corollary}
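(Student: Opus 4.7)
The plan is to observe that this corollary is a direct specialization of the main theorem (Theorem~\ref{main_theorem}) to the specific Boyd--Lawton collection $\mc C_1$ consisting of continuous complex-valued functions on the tori $\torus^n$. So essentially no new argument is needed; the work has already been packaged into the general framework.

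In more detail, I would first invoke Proposition~\ref{prop: C_1}, which states that $\mc C_1 = \bigcup_{n\ge 1} \mc C_1(\torus^n)$ is a Boyd--Lawton collection in the sense of Definition~\ref{BL collection def}. Since $g\colon\torus^n\to\bC$ is by hypothesis continuous, we have $g\in\mc C_1(\torus^n)$. I would then apply Theorem~\ref{main_theorem} to $g$ and $\mc C_1$, which immediately gives the stated limit
\[
\int_{\torus^{n}} g \, d \tau_{n} \; = \;  \lim_{\mu\left( q_A \right)\to \infty}\;  \int_{\torus^m} g \circ q_A \; d \tau_m,
\]
where $q_A\colon\torus^m\to\torus^n$ ranges over continuous homomorphisms with $m$ allowed to vary.

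There is no real obstacle here, since the difficult work has already been done in two places: the verification that $\mc C_1$ is a Boyd--Lawton collection (which itself rests on Boyd's classical single-variable result \cite[Lemma~1]{boyd1981} together with the trivial observation that continuous functions are integrable on the compact space $\torus^n$ and that composition of continuous maps is continuous), and the proof of Theorem~\ref{main_theorem} itself. The only thing one might want to remark on is that in the continuous case the closure-under-composition condition (part~(3) of Definition~\ref{BL collection def}) holds with no height bound $B$ required at all, since $g\circ q_A$ is continuous for \emph{every} $q_A$, not just those of large Boyd height; but this stronger closure is not needed for the conclusion, so no further comment is necessary in the proof.
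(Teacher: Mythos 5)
Your proposal is correct and matches the paper exactly: the paper obtains this corollary by combining Theorem~\ref{main_theorem} with Proposition~\ref{prop: C_1}, precisely as you describe. No further comment is needed.
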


\begin{corollary}
If $P \in\CC\left[Z_1^{\pm1},\dots, Z_n^{\pm1}\right]$ is a nonzero Laurent polynomial, then 
its Mahler measure satisfies the following:
$$
 \lim_{\mu\left({A}\right) \to\infty} m_1\left(P^{(A)}\right) = m_n(P),
$$
where in the limit expression $A$ varies among $n$ by $m$ integer matrices. Here $m$ may vary in the limit, but $n$ is fixed by the choice of~$P$. % (where $m$ itself varies among positive integers but where $n$ is fixed).
\end{corollary}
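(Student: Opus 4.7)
The plan is to obtain this corollary as a direct specialization of Theorem~\ref{main_theorem} to the function $g = \log|P|$, with essentially all of the work already done by the proof of the main theorem and by the verification that $\mathcal C_2$ is a Boyd--Lawton collection.

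First I would set $g \colon \torus^n \to \bC$ to be the function $(z_1,\ldots,z_n) \mapsto \log|P(z_1,\ldots,z_n)|$. Then $g$ is a member of $\mathcal C_2(\torus^n)$, and Proposition~\ref{prop: C_2} tells us that $\mathcal C_2 = \bigcup_{n\ge 1} \mathcal C_2(\torus^n)$ is a Boyd--Lawton collection. Hence Theorem~\ref{main_theorem} applies to $g$ and yields
\[
\int_{\torus^n} g \, d\tau_n \; = \; \lim_{\mu(q_A)\to\infty} \int_{\torus^m} g \circ q_A \, d\tau_m,
\]
where $q_A\colon \torus^m\to\torus^n$ ranges over continuous homomorphisms with $n$ fixed and $m$ allowed to vary.

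Next I would translate both sides back into Mahler-measure notation. The left-hand side is by definition $\int_{\torus^n}\log|P|\, d\tau_n = m_n(P)$. For the right-hand side, for any $A \in M_{n,m}(\bZ)$ we have, as functions on $\torus^m$,
\[
g \circ q_A \; = \; \log|P|\circ q_A \; = \; \log\bigl|P \circ q_A\bigr| \; = \; \log\bigl|P^{(A)}\bigr|,
\]
using the identity $P\circ q_A = P^{(A)}$ noted in Section~\ref{section_BL_Collections}. Thus $\int_{\torus^m} g\circ q_A \, d\tau_m$ is precisely the Mahler measure of the Laurent polynomial $P^{(A)}$. Finally, Lemma~\ref{Pqr_nonvanishing} guarantees that $P^{(A)}$ is a nonzero Laurent polynomial for all $A$ with $\mu(A)$ sufficiently large, so the Mahler-measure expression on the right is well-defined along the tail of the limit.

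I do not anticipate any real obstacle: the corollary is a bookkeeping exercise that merely extracts the statement for $g = \log|P|$ from Theorem~\ref{main_theorem}, relying on Proposition~\ref{prop: C_2} to place $g$ in a Boyd--Lawton collection and on Lemma~\ref{Pqr_nonvanishing} to ensure that $P^{(A)} \ne 0$ once the Boyd height is large. All of the analytic content --- the single-variable Boyd--Lawton input, the height composition estimate of Proposition~\ref{TP_prop}, and the triangle-inequality argument --- has already been dispatched upstream.
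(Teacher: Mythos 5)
Your proposal is correct and matches the paper's argument: the corollary is obtained by specializing Theorem~\ref{main_theorem} to $g = \log|P| \in \mathcal C_2(\torus^n)$ via Proposition~\ref{prop: C_2}, then translating $\int_{\torus^m} g\circ q_A\,d\tau_m$ back into $m_1(P^{(A)})$ using the identity $\log|P|\circ q_A = \log|P^{(A)}|$. The paper leaves these bookkeeping steps implicit, but your filled-in details (including invoking Lemma~\ref{Pqr_nonvanishing} for the nonvanishing of $P^{(A)}$) are exactly the right ones.
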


\begin{remark}
The above corollary is not new: it is essentially Theorem 1.1 and Theorem 3.1 of  \cite{Brunault2024}. However, our approach gives quite a different proof of this result.
\end{remark}

Finally, we get a generalization of Theorem~1.2 of \cite{Issa2013}, which to our knowledge is novel:

\begin{corollary} \label{extendedIssa2}
Let $P_1, \ldots, P_k \in\CC\left[Z_1^{\pm1},\dots, Z_n^{\pm1}\right]$ be nonzero Laurent polynomials. Then
the generalized Mahler measure satisfies
    \[\lim_{\mu\left( {A} \right)  \to \infty} m_{n, \max}\left( P_1^{(A)}, \ldots,  P_k^{(A)}\right)
    =m_{n, \max}( P_1, \ldots,  P_k),
    \]
    and the multiple higher Mahler measure satisfies
    \[
    \lim_{\mu\left( A \right)  \to \infty} m_{n, \operatorname{prod}}\left( P_1^{(A)}, \ldots,  P_k^{(A)}\right)
    =m_{n, \operatorname{prod}}(P_1, \ldots, P_k),
    \]
where in the limit expression $A$ varies among $n$ by $m$ integer matrices. Here $m$ may vary in the limit, but $n$ is fixed by the choice of the~$P_i$. % (where $m$ itself varies among positive integers but where $n$ is fixed).
\end{corollary}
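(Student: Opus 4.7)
The plan is to apply Theorem~\ref{main_theorem} directly to the Boyd--Lawton collections $\mc C_3$ and $\mc C_4$ established in Proposition~\ref{prop: C_3}. For the generalized Mahler measure claim, I would take
\[
g \;\defeq\; \max\{\log|P_1|, \ldots, \log|P_k|\} \in \mc C_3(\torus^n),
\]
which by definition satisfies $\int_{\torus^n} g\, d\tau_n = m_{n,\max}(P_1, \ldots, P_k)$. Theorem~\ref{main_theorem} then yields
\[
m_{n,\max}(P_1, \ldots, P_k) \;=\; \lim_{\mu(q_A)\to\infty}\int_{\torus^m} g\circ q_A\, d\tau_m.
\]
To identify the right-hand side with the generalized Mahler measure of the substituted tuple, I would invoke Lemma~\ref{Pqr_nonvanishing} to guarantee that each $P_i^{(A)}$ is a nonzero Laurent polynomial once $\mu(q_A)$ exceeds a uniform bound, and then apply the identity chain from Equation~\eqref{Eq. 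IdsUsed} to obtain
\[
g \circ q_A \;=\; \max\left\{\log\bigl|P_1^{(A)}\bigr|, \ldots, \log\bigl|P_k^{(A)}\bigr|\right\}
\]
as functions on $\torus^m$. Integrating this identity produces the generalized Mahler measure of $(P_1^{(A)}, \ldots, P_k^{(A)})$, establishing the first limit.

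The multiple higher Mahler measure claim is entirely parallel. One takes
\[
g \;\defeq\; \log|P_1|\cdots\log|P_k| \in \mc C_4(\torus^n),
\]
applies Theorem~\ref{main_theorem} together with Proposition~\ref{prop: C_3} for $\mc C_4$, and uses the product analogue $g\circ q_A = \log\bigl|P_1^{(A)}\bigr|\cdots\log\bigl|P_k^{(A)}\bigr|$ (which follows by the same reasoning as Equation~\eqref{Eq. IdsUsed}, with $\max$ replaced by the pointwise product) to match the limit with the multiple higher Mahler measure of $(P_1^{(A)}, \ldots, P_k^{(A)})$.

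No real obstacle remains at this stage, since all of the substantive analytic work has already been invested in proving Theorem~\ref{main_theorem} and in verifying (in Proposition~\ref{prop: C_3}) that $\mc C_3$ and $\mc C_4$ are Boyd--Lawton collections. The corollary is essentially a translation of Theorem~\ref{main_theorem} into the language of Mahler measures; the only technical point is the appeal to Lemma~\ref{Pqr_nonvanishing} together with the composition identities, which allow one to rewrite $g \circ q_A$ in terms of the power substitutions $P_i^{(A)}$.
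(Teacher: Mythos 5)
Your proposal is correct and takes essentially the same approach as the paper, which states this corollary as following immediately from combining Theorem~\ref{main_theorem} with Proposition~\ref{prop: C_3}. You have simply spelled out the details the paper leaves implicit: the appeal to Lemma~\ref{Pqr_nonvanishing} to guarantee the $P_i^{(A)}$ are nonzero for $\mu(q_A)$ large, and the identities of Equation~\eqref{Eq. IdsUsed} (and its product analogue) to rewrite $\int_{\torus^m} g\circ q_A\, d\tau_m$ as $m_{m,\max}(P_1^{(A)},\ldots,P_k^{(A)})$ or $m_{m,\operatorname{prod}}(P_1^{(A)},\ldots,P_k^{(A)})$.
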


%%%%%
%%%%%
%%%%%

\section*{Appendix: Homomorphisms between Real Tori (Part 2)}

In Section~\ref{Section_Homomorphisms} we made the claim that every continuous homomorphism $\varphi \colon \torus^m \to \torus^n$ is of the form $q_A$ for a unique $n$ by $m$ integer matrix $A$.
We believe this result is known, but we have yet to find a convenient reference, so we give details for the benefit of the reader. 

We start with the commutative diagram
 $$
 \begin{tikzcd} [column sep = large]
\bR^m  \arrow[r, dashed]  \arrow[d] \arrow[rd, "f"] & \bR^n \arrow[d]    \\
\bR^m/\bZ^m  \arrow[r, "\bar \varphi"]  \arrow[d, "\mathrm{exp}"] & \bR^n/\bZ^n  \arrow[d, "\mathrm{exp}"]   \\
\torus^m  \arrow[r,  "\varphi"]  & \torus^n    
\end{tikzcd}
$$
Here $\varphi$ is the given continuous homomorphism. 
The exponential map $\bR^m / \bZ^m \to T^m$ is the isomorphism 
$$
(z_1, \ldots, z_m) \mapsto \left( \exp(2 \pi z_1 i), \ldots, \exp(2 \pi z_m i) \right)
$$
and the exponential map $\bC^n / \bZ^n \to \torus^n$ is defined similarly.
These exponential maps are homeomorphisms and isomorphisms of groups, so there is a unique continuous homomorphism $\bar \varphi$ making the bottom square commute.
The unlabeled maps above are just the canonical projections, and $f$ is the projection $\bR^m \to \bR^m/\bZ^m$ followed by $\overline \varphi$.
Our goal is to study the liftings $\bR^m \to \bR^n$ of $f$ given above by a dashed arrow.

We first note that $\bR^n$ is a covering space of $\bR^n/\bZ^n$ with the canonical map giving a covering map.
See Munkres \S 53 \cite{munkres} for the definitions:  $\bR^n \to \bR^n/\bZ^n$ is a covering map since it is surjective and continuous, and every point of $\bR^n/\bZ^n$ has a neighborhood $U$
whose preimage is a disjoint countable union of neighborhoods $V_i$ such that the restriction  maps $V_i \to U$ are all homeomorphisms.
Furthermore, $\bR^m, \bR^n, \bR^n/\bZ^n$ are all path connected and are  all manifolds, so are all locally path connected. Finally, $\bR^m$ is simply connected with trivial fundamental group.
We can then apply Lemma 79.1 of \cite{munkres} to prove the following (using the fact that~$f(0) = 0$):

\begin{lemma}
Let $f$ be as above. Then there is a unique continuous map $\tilde \varphi \colon \bR^m \to \bR^n$ mapping $0$ to $0$ and making the following commute:
 $$
 \begin{tikzcd} [column sep = large]
\bR^m  \arrow[r, dashed, "\tilde \varphi"]  \arrow[d] \arrow[rd, "f"] & \bR^n \arrow[d]    \\
\bR^m/\bZ^m  \arrow[r, "\bar \varphi"] & \bR^n/\bZ^n 
\end{tikzcd}
$$
\end{lemma}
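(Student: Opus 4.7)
The plan is to invoke the standard lifting lemma from covering space theory (Lemma~79.1 of~\cite{munkres}), which is precisely the tool the authors flag. First I would make sure every hypothesis of that lemma is in place. The projection $p\colon \bR^n \to \bR^n/\bZ^n$ is a covering map: it is a continuous surjective group homomorphism, and for each coset $x + \bZ^n$ one can take the image under $p$ of an open ball of radius $< 1/2$ around a representative as an evenly covered neighborhood, whose preimage is the disjoint union of integer translates of that ball, each mapping homeomorphically under $p$. The source $\bR^m$ and the target $\bR^n/\bZ^n$ are smooth manifolds, hence path-connected and locally path-connected, and $\bR^m$ is simply connected. Finally, $f(0) = 0 + \bZ^n$ by construction of $f = \bar\varphi \circ \pi$, since the continuous homomorphism $\bar\varphi$ sends the identity coset to the identity coset.

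For existence, the lifting criterion then asserts that $f$ lifts to a continuous $\tilde\varphi\colon \bR^m \to \bR^n$ with $\tilde\varphi(0) = 0$ provided
\[
f_{*}\!\bigl(\pi_{1}(\bR^m, 0)\bigr) \;\subseteq\; p_{*}\!\bigl(\pi_{1}(\bR^n, 0)\bigr).
\]
Since $\bR^m$ is contractible, $\pi_{1}(\bR^m, 0)$ is trivial, so the containment holds vacuously, and a lift $\tilde\varphi$ exists satisfying $p \circ \tilde\varphi = f$.

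For uniqueness, I would appeal to the standard uniqueness of lifts (Lemma~79.1 or Theorem~54.1 of~\cite{munkres}): two continuous lifts of the same map out of a connected space that agree at a single point must coincide everywhere. Any two candidate lifts send $0$ to $0$ by hypothesis, and $\bR^m$ is connected, so uniqueness is immediate.

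The only genuinely subtle step is verifying that $p\colon \bR^n \to \bR^n/\bZ^n$ is a covering map in the precise sense demanded by Munkres, as opposed to merely a continuous surjection; once that is recorded carefully, the remainder of the argument is a mechanical invocation of the lifting theory, exploiting that $\bR^m$ is simply connected.
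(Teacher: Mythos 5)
Your proof is correct and takes essentially the same route as the paper: both verify that $\bR^n \to \bR^n/\bZ^n$ is a covering map, check the connectedness and simple-connectedness hypotheses, and then invoke Munkres's lifting lemma (Lemma~79.1) for existence and uniqueness of the lift through $0$.
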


The uniqueness claim of this lemma is enough to show that there is at most one linear map $\tilde A$ such that $\varphi = q_A$.
What we need to do is show that the $\tilde \varphi$ of this lemma is in fact a linear map $\tilde A$. Suppose that we fix~$b \in \bR^m$ and consider the following function that measures
any failure of $\tilde \varphi$ to be a homomorphism:
$$
\delta (x) = \tilde \varphi (x+b) - \tilde\varphi(x) - \tilde\varphi(b).
$$
Observe that $\delta$ is continuous and $\delta(0) = 0$.
Consider $\pi \circ \delta$ where $\pi \colon \bR^n \to \bR^n/\bZ^n$ is the canonical map.
Since $\pi$ is a homomorphism and $\pi \circ \tilde \varphi = f$ is a homomorphism we see that 
$$
\pi \circ \delta (x) = \pi \circ \tilde \varphi  (x+b) - \pi \circ \tilde \varphi (x) - \pi \circ \tilde \varphi   (b) = f (x+b) - f (x) - f  (b) = 0.
$$
In particular, $\delta(x)$ is always in the kernel of $\pi$. In other words, $\delta(x) \in \bZ^n$. However, $\delta$ is continuous and $\delta(0) = 0$. Since $\bR^m$ is connected, 
$\delta(x) = 0$ for all $\bR^m$. We have established the following:

\begin{lemma}
Let $\tilde \varphi \colon \bR^m \to \bR^n$ be as in the above lemma. Then for all $x, b \in \bR^m$
$$
\tilde\varphi(x+b) = \tilde\varphi(x) + \tilde\varphi(b).
$$
In particular, $\tilde\varphi$ is a continuous homomorphism.
\end{lemma}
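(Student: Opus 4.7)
The plan is to prove additivity of $\tilde\varphi$ by exploiting the fact that it lifts the quotient homomorphism $\bar\varphi$ through the discrete covering $\pi\colon \bR^n\to \bR^n/\bZ^n$. For each fixed $b\in\bR^m$, I would introduce the ``defect'' function
$$
\delta_b(x) \defeq \tilde\varphi(x+b) - \tilde\varphi(x) - \tilde\varphi(b),
$$
whose identical vanishing is exactly the additivity statement. Two things are immediate: $\delta_b$ is continuous in $x$ (since $\tilde\varphi$ is), and $\delta_b(0) = 0$ (since $\tilde\varphi(0) = 0$, which was built into the choice of $\tilde\varphi$ in the previous lemma).

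Next, I would push $\delta_b$ down to the quotient via $\pi$. Using $\pi\circ\tilde\varphi = f$ and the fact that $f$ is the composition of the projection $\bR^m\to \bR^m/\bZ^m$ with the group homomorphism $\bar\varphi$, one computes
$$
\pi(\delta_b(x)) = f(x+b) - f(x) - f(b) = 0.
$$
Hence $\delta_b(x) \in \ker\pi = \bZ^n$ for every $x\in\bR^m$.

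The finishing step is a standard discreteness/connectedness argument: $\delta_b$ is a continuous map from the connected space $\bR^m$ into the discrete subset $\bZ^n\subseteq \bR^n$, so its image is contained in a single connected component of $\bZ^n$, i.e.\ a single point; since $\delta_b(0) = 0$, that point must be $0$, so $\delta_b\equiv 0$. Because $b$ was arbitrary, $\tilde\varphi$ is additive on all of $\bR^m$, and it is continuous by construction, hence a continuous homomorphism. I foresee no serious obstacle here: the whole mechanism---that a basepoint-preserving continuous lift of a quotient homomorphism through a discrete covering must itself be a homomorphism---is a general phenomenon for coverings of topological groups, and the argument above is simply the direct concrete realization of it in our setting.
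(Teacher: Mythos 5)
Your argument is correct and coincides with the paper's own proof essentially line for line: the same defect function $\delta_b(x)=\tilde\varphi(x+b)-\tilde\varphi(x)-\tilde\varphi(b)$, the same observation that $\pi\circ\delta_b\equiv 0$ forces $\delta_b$ to take values in $\bZ^n$, and the same connectedness-plus-basepoint argument to conclude $\delta_b\equiv 0$. No differences worth noting.
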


We can go further:

\begin{lemma}
Let $L\colon \bR^m \to \bR^n$ be a continuous homomorphism.
Then $L$ is actually a linear transformation.
\end{lemma}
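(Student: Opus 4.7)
The plan is to leverage the additivity of $L$ (which we already have as a hypothesis since $L$ is a group homomorphism from $(\mathbb{R}^m, +)$ to $(\mathbb{R}^n, +)$) together with continuity to upgrade $\mathbb{Z}$-linearity all the way to $\mathbb{R}$-linearity. Additivity immediately gives $\mathbb{Z}$-linearity: $L(0) = 0$, $L(-x) = -L(x)$, and by induction $L(nx) = nL(x)$ for every $n \in \mathbb{Z}$ and every $x \in \mathbb{R}^m$.

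Next I would promote this to $\mathbb{Q}$-linearity. For any positive integer $n$ and any $x \in \mathbb{R}^m$, additivity gives $nL(x/n) = L(x)$, so $L(x/n) = L(x)/n$. Combining this with $\mathbb{Z}$-linearity yields $L(qx) = qL(x)$ for all $q \in \mathbb{Q}$ and all $x \in \mathbb{R}^m$.

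Finally, I would use continuity to pass from $\mathbb{Q}$ to $\mathbb{R}$. Given $r \in \mathbb{R}$, choose a sequence of rationals $q_k \to r$. Then on one hand $L(q_k x) = q_k L(x) \to rL(x)$ in $\mathbb{R}^n$, and on the other hand $q_k x \to rx$ in $\mathbb{R}^m$ so by continuity of $L$ we have $L(q_k x) \to L(rx)$. Uniqueness of limits gives $L(rx) = rL(x)$. Combined with additivity, this shows that $L$ is $\mathbb{R}$-linear, hence a linear transformation in the usual sense. There is really no main obstacle here; the only subtlety is the routine $\mathbb{Q}$-to-$\mathbb{R}$ density argument, which is standard. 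Applying this lemma to $\tilde\varphi$ of the preceding lemma then gives a matrix $\tilde A = A$ in $M_{n,m}(\mathbb{R})$; the fact that $\tilde\varphi$ descends to a well-defined map $\mathbb{R}^m/\mathbb{Z}^m \to \mathbb{R}^n/\mathbb{Z}^n$ forces $A(\mathbb{Z}^m) \subseteq \mathbb{Z}^n$, so $A \in M_{n,m}(\mathbb{Z})$ as desired.
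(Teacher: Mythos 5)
Your proof is correct and follows essentially the same route as the paper: establish $\mathbb{Z}$-linearity from additivity, upgrade to $\mathbb{Q}$-linearity by clearing denominators, then use density of $\mathbb{Q}$ together with continuity to extend to all of $\mathbb{R}$. The closing remark about $A(\mathbb{Z}^m)\subseteq\mathbb{Z}^n$ goes beyond the lemma as stated but matches the paper's subsequent argument.
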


\begin{proof}
For any positive integer $p$ and $x \in \bR^m$,
$$
L(p x) = L(x+ \ldots + x) = L(x) + \ldots + L(x) = p L(x).
$$
Replacing $x$ with $\frac 1 p y$ and multiplying the terms of the above equation by $\frac 1 p$ gives
$$
L\left ( \frac 1 p \cdot y\right) = \frac 1 p L(y).
$$
For any positive rational number $r = p/q$ with $p, q \in \bZ_{>0}$, and $y \in \bR^m$ we combine
the above laws to get
$$
L (r x) = L \left( \frac p q \cdot x \right) = p L  \left( \frac 1 q \cdot x \right) = \frac p q L(x) = r L(x).
$$
We can extend this to any $r \in \bQ$ since $L(0) = 0$, and 
$$L\big((-r) x\big) = L\big(-(rx)\big) = - L(r x) = - \big( r L(x) \big) = (-r) L(x)$$ for positive $r \in \bQ$.

Now for $r\in\bR$ write $r$ as the limit of $(r_i)$ with $r_i \in \bQ$. By continuity of $L$ and of the scalar product,
$$
L( r x) =L \left(  \lim_{i \to \infty}  r_i x \right) = \lim_{i \to \infty}  L(r_i x) = \lim_{i \to \infty}  r_i L(x)  = r L(x).
$$
Thus $L$  is a homomorphism compatible with scalar multiplication. In other words, $L$ is a linear transformation.
\end{proof}

In particular, the unique lifting $\tilde \varphi$ is not just a homomorphism, but must be a linear transformation $\tilde A$ for some matrix $A$.
Let $e_i$ be the $i$th element of the standard basis of $\bR^m$ and observe that $\tilde A(e_i)$ must be in $\bZ^n$.
This follows from the commutivity of the map
 $$
 \begin{tikzcd} [column sep = large]
\bR^m  \arrow[r, dashed, "\tilde A"]  \arrow[d] & \bR^n \arrow[d]    \\
\bR^m/\bZ^m  \arrow[r, "\bar \varphi"] & \bR^n/\bZ^n 
\end{tikzcd}
$$
and the fact that the image of $e_i$ in the group $\bR^m / \bZ^m$ is zero, so $\tilde A(e_i)$ is in the kernel of the homomorphism~$\bR^n \to \bR^n/\bZ^n$.
Since $\tilde A(e_i)$ is in $\bZ^n$, the $i$th column of the associated matrix~$A$ is in $\bZ^n$. In particular, $A$ is an integer matrix. 

Finally, since $\tilde \varphi = \tilde A$, the  corresponding maps in the following commutative diagram are equal:
 $$
 \begin{tikzcd} [column sep = large]
\bR^m  \arrow[r, "\tilde\varphi \, = \,  \tilde A"]  \arrow[d] & \bR^n \arrow[d]    \\
\bR^m/\bZ^m  \arrow[r, "\bar \varphi \, = \,  \bar A "]  \arrow[d, "\mathrm{exp}"] & \bR^n/\bZ^n  \arrow[d, "\mathrm{exp}"]   \\
\torus^m  \arrow[r,  "\varphi \, = \,  q_A"]  & \torus^n    
\end{tikzcd}
$$

In summary:
\begin{proposition}
Every continuous homomorphism $\varphi \colon \torus^m \to \torus^n$ is of the form $q_A$ for a unique $n$ by $m$ integer matrix $A$.
\end{proposition}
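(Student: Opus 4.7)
The plan is to package the three lemmas already proved in the appendix into a single statement, so the real work is orchestration rather than new content. I would start by passing to the universal cover: given the continuous homomorphism $\varphi\colon \torus^m \to \torus^n$, the exponential maps produce an induced continuous homomorphism $\bar\varphi\colon \bR^m/\bZ^m \to \bR^n/\bZ^n$, and then the first lemma (via Munkres, Lemma 79.1) furnishes a unique continuous lift $\tilde\varphi\colon \bR^m \to \bR^n$ with $\tilde\varphi(0)=0$ covering $f = \bar\varphi \circ \pi$. Uniqueness of this lift will also drive the uniqueness of $A$ at the end.

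Next, I would apply the second lemma to show $\tilde\varphi$ is itself a homomorphism: the defect $\delta(x) = \tilde\varphi(x+b) - \tilde\varphi(x) - \tilde\varphi(b)$ projects to zero in $\bR^n/\bZ^n$, hence lands in $\bZ^n$; since $\delta$ is continuous with $\delta(0)=0$ and $\bR^m$ is connected, $\delta \equiv 0$. Then the third lemma upgrades $\tilde\varphi$ from a continuous additive map to an $\bR$-linear map, by using additivity to get $\bQ$-linearity and continuity to extend to $\bR$-linearity. Hence $\tilde\varphi = \tilde A$ for some real $n\times m$ matrix $A$.

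To see that $A$ is an integer matrix, I would evaluate on the standard basis: each $e_i \in \bR^m$ projects to $0$ in $\bR^m/\bZ^m$, so $\tilde A(e_i)$ lies in the kernel $\bZ^n$ of $\bR^n \to \bR^n/\bZ^n$. Therefore every column of $A$ lies in $\bZ^n$, and $A \in M_{n,m}(\bZ)$. Pushing $\tilde A$ down through the exponential maps gives $q_A$, and by construction the bottom square of the diagram commutes, so $\varphi = q_A$. Uniqueness of $A$ follows from the uniqueness of the lift $\tilde\varphi$ in the first step, together with the fact that $\tilde A$ is determined by its values on the standard basis.

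The main obstacle is really the second step, namely showing the lift is additive, since that is where one uses the specific form of the covering and cannot rely on generalities alone; once additivity is in hand, the remaining passage from continuous homomorphism to linear map to integer matrix is essentially bookkeeping. Since both of these steps are already carried out as separate lemmas in the appendix, the proof of the proposition itself amounts to stringing the lemmas together and observing that the resulting $q_A$ equals $\varphi$.
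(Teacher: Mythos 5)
Your proposal is correct and follows essentially the same route as the paper's appendix proof: lift through the exponential covering via Munkres Lemma~79.1, use the connectedness/continuity argument on the defect $\delta$ to get additivity, upgrade to $\bR$-linearity via $\bQ$-linearity and density, read off integrality of $A$ from the standard basis vectors, and derive uniqueness of $A$ from uniqueness of the lift. There is no substantive difference in strategy or key steps.
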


\bibliography{dyn2024}
\bibliographystyle{plainnat} 

\end{document}